\numberwithin{equation}{subsection} 
\crefname{equation}{}{} 
\crefname{subsection}{\S\kern -.5ex}{} 
\crefname{section}{\S\kern -.5ex}{\S\S}
\crefname{enumi}{}{} \creflabelformat{enumi}{#2(#1)#3}
\theoremstyle{plain}
\newtheorem{corollary}[subsection]{Corollary} 
\newtheorem{lemma}[subsection]{Lemma}
\newtheorem{proposition}[subsection]{Proposition}
\newtheorem{theorem}[subsection]{Theorem} 
\theoremstyle{definition}%
\newcommand\CE{{\mathcal E}}
\newcommand\CJ{{\mathscr J}}
\newcommand\CO{{\mathcal O}}
\newcommand\CU{{\mathcal U}}
\newcommand\BBC{{\mathbb C}}
\newcommand\BBh{{\mathbbm h}}
\newcommand\BBL{{\mathbb L}}
\newcommand\BBN{{\mathbb N}}
\newcommand\BBQ{{\mathbb Q}}
\newcommand\BBZ{{\mathbb Z}}
\newcommand\Chow{\operatorname{CH}}
\newcommand\Hom{{\operatorname{Hom}}}
\newcommand\mult{\operatorname{mult}}
\newcommand\res{\operatorname{res}}
\newcommand\inverse{^{-1}}
\renewcommand\th{{^{\text{th}}}}
\newcommand\id{{id}}
\newcommand\Bbar{\overline B}
\newcommand\BBhhat{\widehat{\BBh}}
\newcommand\bfD{\mathbf D}
\newcommand\chat{\widehat c}
\newcommand\ch{\textrm{ch}}
\newcommand\fhat{\widehat f}
\newcommand\GrC{\textrm{\bf Gr}_{\BBC}}
\newcommand\GrSmC{\textrm{\bf Gr-Sm}_{\BBC}}
\renewcommand\hbar{\overline h}
\newcommand\pt{\operatorname{pt}}
\newcommand\Rhat{\widehat R}
\newcommand\reshat{\widehat {\res}}
\newcommand\rev{^{\textrm{rev}}}
\newcommand\Shat{\widehat S}
\newcommand\SmC{\textrm{\bf Sm}_{\BBC}}
\newcommand\varphihat{\widehat \varphi}
\newcommand\wtilde{\widetilde w}
\newcommand\Wtilde{\widetilde W}
\newcommand\xihat{\widehat \xi}
\title[Leray-Hirsch Theorem]{The Leray-Hirsch Theorem for oriented cohomology
  of flag varieties}
\author[J.M. Douglass]{J. Matthew Douglass}
\address{Division of Mathematical
  Sciences, National Science Foundation, 2415 Eisenhower Ave, Alexandria, VA
  22314, USA}
\email{mdouglas@nsf.gov}
\thanks{The authors would like to thank the anonymous referee for
  insightful comments that led to significant improvements in the paper.
  J.M.~Douglass would like to acknowledge that this material is based
  upon work supported by (while serving at) the National Science Foundation.
  However, any opinion, finding, and conclusions or recommendations expressed
  in this material are those of the author and do not necessarily reflect the
  views of the National Science Foundation.
}
\author[C.~Zhong]{Changlong~Zhong} \address{State University of New York at
  Albany, 1400 Washington Ave, Albany, NY 12222}
\email{czhong@albany.edu}
\subjclass[2010]{ Primary 14M15, 
  Secondary 14F43, 
}
\keywords{equivariant oriented cohomology, flag varieties, Leray-Hirsch
  Theorem}
\begin{document}

\begin{abstract}
  We construct two explicit Leray-Hirsch isomorphisms for torus equivariant
  oriented cohomology of flag varieties and give several applications. One
  isomorphism is geometric, based on Bott-Samelson classes. The other is
  algebraic, based on the description of the torus equivariant oriented
  cohomology of a flag variety as the dual of a formal affine Demazure
  algebra.
\end{abstract}

\maketitle
\allowdisplaybreaks


\section{Introduction}

\subsection{}\label{ssec:1.1}
The Leray-Hirsch Theorem is a fundamental result from algebraic topology.
Suppose $\pi\colon E\to B$ is fibre bundle with fibre $F$ and fibre embedding
$i\colon F\to E$. Then $\pi$ and $i$ induce maps
\[
  \pi^*\colon H^*(B)\to H^*(E) \quad\text{and}\quad i^*\colon H^*(E)\to H^*(F)
\]
in singular cohomology (with integer coefficients). Given any homomorphism
$j\colon H^*(F)\to H^*(E)$, we can define a \emph{Leray-Hirsch homomorphism}
\begin{equation*}
  \label{eq:0}
  \varphi_j\colon H^*(B)\otimes H^*(F) \to H^*(E)\quad\text{by}\quad
  \varphi_j(b \otimes f) = \pi^*(b) \smile j(f) . 
\end{equation*}
Under reasonable hypotheses, which include that the map $i^*$ is surjective
and that the map $j$ is a right inverse (or section) of $i^*$, the conclusion
of the Leray-Hirsch Theorem is that the Leray-Hirsch homomorphism $\varphi_j$
is an isomorphism.

\subsection{}\label{ssec:1.2}
Examples with applications in Schubert calculus and the representation theory
of groups of Lie type are the fibre bundles that arise as projections from a
complete flag variety to a partial flag variety. Suppose $G$ is a connected,
reductive, complex algebraic group, $B$ is a Borel subgroup of $G$, and $P$ is
a parabolic subgroup that contains $B$. Then the natural projection $\pi\colon
G/B \to G/P$ is a fibre bundle with fibre $P/B$. Suppose $T\subseteq B$ is a
maximal torus and $L$ is the Levi factor of $P$ that contains $T$. Then
$P/B\cong L/ (B\cap L)$ and the Leray-Hirsch Theorem provides an explicit
isomorphism
\begin{equation*}
  \label{eq:3}
  H^*(G/P)\otimes H^*(L/(B\cap L))\cong H^*(G/B).
\end{equation*}

This isomorphism shows that cohomologically, the flag variety $G/B$ is the
product of the partial flag variety $G/P$ and the smaller flag variety
$L/(B\cap L)$, and is a topological incarnation of the factorization of the
group algebra of the group ring of $W$,
\[
  \BBZ W^L \otimes \BBZ W_L \cong \BBQ W,
\]
where $W$ is the Weyl group of $(G,T)$, $W_L$ is the Weyl group of $(L,T)$,
and $W^L$ is the set of minimal length left coset representatives of $W_L$ in
$W$.

\subsection{}\label{ssec:1.4}
In this paper, we construct two distinct Leray-Hirsch homomorphisms,
\begin{equation*}
  \label{eq:lhiso}
  \BBh_T(G/P)\otimes_{\BBh_T(\pt)}\BBh_T(P/B) \xrightarrow{\ \cong\ }
  \BBh_T(G/B), 
\end{equation*}
where, $\pt$ denotes the one-point variety and $\BBh_\star(\cdot)$ is an
equivariant oriented cohomology theory, as defined by Calm\`es, Zainoulline,
and Zhong. The first homomorphism, $\varphi_g$, which we call the
\emph{geometric Leray-Hirsch homomorphism,} is an extension of the geometric
construction in ordinary cohomology sketched below. It depends on the choice
of resolutions of the Schubert varieties in $G/B$. The second, $\varphi_a$,
which we call the \emph{algebraic Leray-Hirsch homomorphism,} is simpler and
more natural from a purely algebraic perspective, but a direct connection with
the underlying geometry of $G/B$ is not known, and in general, $\varphi_a$ is
only defined when $\BBh_\star(\cdot)$ has a universal property (in a category
of equivariant oriented cohomology theories), namely that $\BBh_\star(\cdot)$
is Chern complete over the point for $T$.

It is shown in \cref{thm:main} that both homomorphisms are isomorphisms when
$\BBh_\star(\cdot)$ is Chern complete, in \cref{thm:noet} that $\varphi_g$ is
an isomorphism whenever $\BBh_{\langle e \rangle}(\pt)$ is Noetherian
($\langle e\rangle$ is the trivial group), and in \cref{sec:apdx} that
$\varphi_g$ is an isomorphism in (algebraic) equivariant $K$-theory.

\subsection{}
In singular cohomology, the fundamental classes of the Schubert varieties
$\overline{B\cdot wB}$, for $w \in W$, form a basis of the homology of
$G/B$. Let $\{\, \xi^w\mid w\in W\,\}$ be the dual basis of $H^*(G/B)$, with
respect to the usual scalar pairing between homology and cohomology, so
$\xi^w\in H^{2\ell(w)}(G/B)$, where $\ell(w)$ is the length of $w$. Replacing
$G/B$ by $L/(B\cap L)$ and using the natural isomorphism $L/(B\cap L) \cong
P/B$ gives a basis $\{\, \xi^v_L\mid v\in W_L\,\}$ of $H^*(P/B)$. Starting
with the $B$-orbits in $G/P$, the same construction gives a basis $\{\,
\xi^w_P\mid w\in W^L\,\}$ of $H^*(G/P)$. Bernstein, Gelfand, and Gelfand
\cite{bernsteingelfandgelfand:schubert} have shown that $\pi^*\colon
H^*(G/P)\to H^*(G/B)$ is injective. In addition, one can check that $i^*\colon
H^*(G/B)\to H^*(P/B)$ is surjective with
\[
  i^*(\xi^w) = \begin{cases} \xi_{L}^w, &w\in W_L, \\ 0, &w\notin W_L. 
     \end{cases}
\]
Define
\[
  j\colon H^*(P/B)\to H^*(G/B)\quad \text{by}\quad \text{$j(\xi_{L}^v) =
    \xi^v$ for $v\in
  W_L$.}
\]
 Then $j$ is a right inverse of $i^*$ and using the results in
\cite{bernsteingelfandgelfand:schubert} one can show that the mapping
\begin{equation}
  \label{eq:bgg}
  H^*(G/P) \otimes H^*(P/B) \to H^*(G/B) \quad\text{given by}\quad \xi^w_P
  \otimes \xi^v_L \mapsto \pi^*(\xi^w_P) \smile j(\xi^v_L) 
\end{equation}
is an isomorphism. 

More generally, one can replace singular cohomology by $T$-equivariant
cohomology, where $T$ is a maximal torus in $B$. Using GKM theory, Drellich
and Tymoczko \cite{drellichtymoczko:module} have described a choice of a right
inverse $j$, constructed a Leray-Hirsch homomorphism,
\begin{equation}
  \label{eq:dt}
  H_T^*(G/P, \BBC) \otimes H_T^*(P/B, \BBC) \to H_T^*(G/B, \BBC)
  \quad\text{given by}\quad  \psi_P^w \otimes \psi_L^v \mapsto \pi^*(\psi_P^w)
  \smile j(\psi_L^v),
\end{equation}
and shown that it is an isomorphism of $H_T(\pt, \BBC)$-modules. In contrast
with the homomorphism in \cref{eq:bgg}, where the classes $\xi_P^w$ and
$\xi_L^v$ arise from Schubert varieties, $\overline{B\cdot wP}$ and
$\overline{(B\cap L)\cdot v(B\cap L)}$, respectively, in \cref{eq:dt} the
classes $\psi_P^w$ and $\psi_L^v$ arise from the \emph{opposite} Schubert
varieties, $\overline{B^-\cdot wP}$ and $\overline{(B\cap L)^-\cdot v(B\cap
  L)}$, respectively.

The Leray-Hirsch isomorphism in \cref{eq:bgg} is obtained from the geometric
Leray-Hirsch homomorphism defined in \cref{ssec:glhh} when $\BBh_\star(\cdot)$
is equivariant cohomology (with integer coefficients), by applying the base
change $\BBZ \otimes_{H_T^*(\pt)}(\, \cdot\, )$, where $\BBZ$ is identified
with the quotient of $H_T^*(\pt)$ by the augmentation ideal (see
\cref{ssec:neq}). The Leray-Hirsch isomorphism in \cref{eq:dt} is the
algebraic Leray-Hirsch homomorphism defined in \cref{ssec:alhh} when
$\BBh_\star(\cdot)$ is equivariant cohomology (with complex coefficients).

In general, the geometric and algebraic Leray-Hirsch homomorphisms are
different. For example, Drellich and Tymoczko show that the base change
functor, $\BBC \otimes_{H_T^*(\pt, \BBC)}(\, \cdot\, )$, where $\BBC$ is
identified with the quotient of $H_T^*(\pt, \BBC)$ by the augmentation ideal,
applied to \cref{eq:dt}, leads to a Leray-Hirsch isomorphism in singular
cohomology that is not equal to the geometric isomorphism in \cref{eq:bgg}.

\subsection{}\label{ssec:1.5}

The genesis of the paper was an effort to prove a Leray-Hirsch isomorphism for
flag varieties in $T$-equivariant $K$-theory. Such an isomorphism would then
lead to Leray-Hirsch isomorphisms
\begin{enumerate}
\item in non-equivariant $K$-theory by base change
  \cite[Rmk.~3.27]{kostantkumar:equivariant},
\item in $T$-equivariant cohomology by passing to associated graded algebras
  \cite[Prop.~2.30]{kostantkumar:equivariant}, and
\item in singular cohomology by base change \cite[\S5]{kostantkumar:nil}.
\end{enumerate}

Using tools developed in \cite{kostantkumar:equivariant} and
\cite{grahamkumar:positivity}, one can construct such a geometric Leray-Hirsch
homomorphism in equivariant $K$-theory and show that it is an
isomorphism. Much of this argument depends only on formal algebraic properties
of equivariant $K$-theory. Calm\`es, Lenart, Zainoulline, Zhong, and others
have developed an algebraic description of the $T$-equivariant oriented
cohomology theory of partial flag varieties that generalizes the constructions
and results of Kostant and Kumar. In this algebraic approach it is natural to
treat the geometric and algebraic constructions simultaneously, as is done in
this paper.

However, this greater generality comes at a cost. The algebraic constructions
depend only on a root datum and a formal group law, and the argument that these
constructions give a purely algebraic construction of $\BBh_T(G/P)$ has only
been completed in the case when the equivariant oriented cohomology theory
$\BBh_\star(\cdot)$ is ``Chern complete over the point for $T$." In other
words, $\BBh_T(\pt)$ is separated and complete with respect to the
$\gamma$-filtration (see \cite[\S2]  {calmeszainoullinezhong:equivariant}).

Some classical equivariant oriented cohomology theories, such as equivariant
Chow rings and equivariant $K$-theory, are not Chern complete over the point
for $T$.  On the other hand, if $\BBh_\star(\cdot)$ is any equivariant
cohomology theory with $\BBh_{\langle e \rangle} (\cdot)$ equal to the
universal oriented cohomology theory of Levine and Morel, namely algebraic
cobordism, then $\BBh_\star(\cdot)$ is Chern complete over the point for
$T$. In general, one can compose the functor $\BBh_\star(\, \cdot\,)$ with a
``base change'' functor to get an equivariant cohomology theory that is Chern
complete over the point for $T$ (see \cite[Rmk.~2.3]
{calmeszainoullinezhong:equivariant} and \cref{ssec:comp}).

\subsection{}\label{ssec:1.8}
The rest of this paper is organized as follows. In \cref{sec:lhh} we review
the notions of oriented and equivariant oriented cohomology theories for
smooth, complex varieties from \cite{levinemorel:algebraic} and
\cite{calmeszainoullinezhong:equivariant}; and construct the geometric
Leray-Hirsch homomorphism for the $T$-equivariant fibre bundle $\pi\colon
G/B\to G/P$ under the (mild) standing assumptions in \cref{ssec:asm}. In
\cref{sec:lhi} we assume that $\BBh_\star(\cdot)$ is Chern complete over the
point for $T$ and review the algebraic description of $\BBh_T(G/B)$,
$\BBh_T(G/P)$, $\pi^*$, and $\pi_*$ developed in
\cite{calmeszainoullinezhong:equivariant},
\cite{calmeszainoullinezhong:coproduct}, and
\cite{calmeszainoullinezhong:push}; define the algebraic Leray-Hirsch
homomorphism; and state the main theorem, namely that the geometric and
algebraic Leray-Hirsch homomorphisms are isomorphisms when $\BBh_\star(\cdot)$
is Chern complete over the point for $T$. The main theorem is proved in
\cref{sec:pf}. In \cref{sec:app} we give several applications, including in
particular that the geometric Leray-Hirsch homomorphism is an isomorphism when
$\BBh_{\langle e\rangle}(\pt)$ is Noetherian. This includes equivariant Chow
rings and equivariant (algebraic) $K$-theory as special cases. Finally, in the
appendix we give a direct proof that the geometric Leray-Hirsch homomorphism
is an isomorphism in $T$-equivariant $K$-theory.

\subsection{}\label{ssec:1.9}

Throughout this paper, for convenience we consider only complex algebraic
groups and complex varieties. The results and proofs are unchanged if the
complex field is replaced by any algebraically closed field with
characteristic zero. If $X$ is a variety, the morphism from $X$ to a point
will be denoted by $a$ or $a_X$ depending on context, so $a=a_X\colon X \to
\pt$.

Unless otherwise indicated, the identity element in a multiplicative group is
denoted by $e$. Any ambient group should always be easily determined by
context. In addition, we assume that the reductive group $G$ is in fact
semisimple and we fix a maximal torus $T$. Passing to a reductive quotient,
say $\widetilde G$, of $G$, and a maximal torus $\widetilde T$ of $\widetilde
G$, changes the coefficient ring, $\BBh_\star(\pt)$, but does not change the
varieties $G/B$, $G/P$, and $P/B$, or the structure of their equivariant
cohomology rings as $\BBh_\star(\pt)$-modules (see \cite[Lem.~11.1]
{calmeszainoullinezhong:equivariant}).

Further standing assumptions are collected in \cref{ssec:asm} after the
requisite notation has been developed.

\section{The geometric Leray-Hirsch homomorphism} \label{sec:lhh}

In this section we fix notation and review the concepts from (algebraic)
equivariant oriented cohomology theories needed to give the precise
formulation of the geometric Leray-Hirsch homomorphism.

\subsection{Group theory notation}\label{ssec:gp}

As in the introduction, $G$ is a semisimple, complex algebraic group,
\[
  T\subseteq B \subseteq P\subseteq G
\]
are a fixed maximal torus, Borel subgroup, and parabolic subgroup of $G$,
respectively; $\pi\colon G/B\to G/P$ is the projection, $i\colon P/B \to G/B$
is the inclusion, and $L$ is the Levi factor of $P$ that contains $T$. Then
$B\cap L$ is a Borel subgroup of $L$ and we may identify $P/B$ with the flag
variety of $L$ via the natural projection $L/(B\cap L) \xrightarrow{\,
  \cong\,} P/B$.

Let $\Lambda$ denote the character group of $T$ and let $\Phi$ be the root
system of $(G,T)$, so $\Phi$ is a subset of $\Lambda$. Let $\Phi^+$ be the set
of positive roots, defined as the set of weights of $T$ on the cotangent space
$T^*_{eB} (G/B)$ at the point $eB\in G/B$. Then $-\Phi^+$ is the set of
weights of $T$ on the tangent space $T_{eB} (G/B)$. Let $\Phi_L$ be the subset
of roots corresponding to the root system of $(L,T)$, and let $\Phi_L^+=
\Phi_L\cap \Phi^+$.

Let $W$ be the Weyl group of $(G,T)$ and let $W_L$ be the Weyl group of
$(L,T)$. We always identify $W_L$ as a subgroup of $W$. The positive system
$\Phi^+$ determines a base of $\Phi$ that in turn determines a set of simple
reflections in $W$, a length function, $\ell$, on $W$, and the Bruhat order,
$\leq$, on $W$. Set $W^L=\{\, w\in W\mid \forall s\in W_L, ws>s\,\}$. Then
$W^L$ is the set of minimal length right coset representatives of $W_L$ in
$W^L$ and multiplication in $W$ defines a bijection $W^L \times
W_L\leftrightarrow W$.

For the simple reflection, $s=s_\alpha\in W$, let $P_s= \langle B, sT\rangle$
be the parabolic subgroup of $G$ that is generated by $B$ and the coset
$sT$. Then $P_s$ contains $B$ and $T$ acts on $T^*_{eB} (P_s/B)$ as
$\alpha$. Let $L_s$ be the Levi factor in $P_s$ that contains $T$. Then
$W_{L_s}= \{e,s\}$ and for $z\in W$, $z\in W^{L_s}$ if and only if $zs>z$.

Fix linear orders on $W^L$ and $W_L$ that extend the Bruhat order. Then the
lexicographic order on $W$ given by the factorization $W= W^LW_L$, say
$\preceq$, is a linear order on $W$ that extends the Bruhat order \cite[Lemma
3.5]{deodhar:characterizations}. In this linear order on $W$ we have
\[
  \text{$wv \preceq w'v'$ \quad if and only if \quad $w \preceq w'$, or $w=w'$
    and $v \preceq v'$.}
\]
When we consider square matrices indexed by $W$ it will always be with respect
to this linear order.

\subsection{Notation for tuples (of simple reflections)}
Suppose $I=(s_1, \dots, s_p)$ and $J=(s_1', \dots, s_q')$ are two tuples of
simple reflections in $W$.
\begin{itemize}
\item $I\rev= (s_p, \dots, s_1)$ is the reverse of $I$.
\item If $s$ is any simple reflection in $W$, then write $s\in I$ if $s=s_i$
  for some $1\leq i\leq p$.
\item Write $I\sqsubseteq J$ if $I$ is a subsequence of $J$.
\item $I\amalg J$ denotes the concatenation of $I$ and $J$.
\end{itemize}

Suppose $E$ and $F$ are subsequences of $I$. 
\begin{itemize}
\item $E\setminus F$ is the subsequence of $I$ formed by the entries in $I$
  that are in positions occupied by $E$ and not $F$.
\item $E\sqcap F$ is the subsequence of $I$ formed by the entries in $I$ that
  are in positions occupied by both $E$ and $F$.
\item $E\sqcup F$ is the subsequence of $I$ formed by the entries in $I$ that
  are in positions occupied by either $E$ or $F$.
\end{itemize}

\subsection{(Algebraic) oriented cohomology theories and formal group laws
  \cite[\S1.1, \S1.2] {levinemorel:algebraic}} \label{ssec:oct}

Let $\SmC$ denote the category of smooth, quasi-projective, complex, algebraic
varieties.  An algebraic oriented cohomology theory, or simply an
\emph{oriented cohomology theory,} in the sense of Levine and Morel, is a
contravariant functor, $\BBh=\BBh(\cdot)$, from $\SmC$ to the category of
commutative, graded rings with identity together with a natural push-forward
map $f_*\colon \BBh(X) \to \BBh(Y)$ for each projective morphism $f\colon X\to
Y$ in $\SmC$. The functor $\BBh$ and the push-forward maps are assumed to
satisfy a natural list of axioms.

Let $\BBh$ be an oriented cohomology theory and set $R=\BBh(\pt)$. It follows
from the axioms that one can define the notion of $\BBh$-Chern classes of
vector bundles. If $E$ is a non-zero vector bundle over a smooth variety $X$,
then $c_1(E)\in \BBh(X)$ denotes the first Chern class of $E$. With this
notation, there is a formal power series $F\in R[[t,u]]$ such that
$c_1(L\otimes M)= F(c_1(L), c_1(M))$ for line bundles $L$ and $M$ on $X$. The
formal power series $F$ has the form
\begin{equation}
  \label{eq:21}
  F(t,u)= \sum_{i,j} r_{i,j} t^i u^j = t+u-tu\cdot G(t,u),
  \quad\text{where}\quad G(t,u) = -r_{1,1} -r_{2,1}t -r_{1,2}u - \dotsm.  
\end{equation}
More precisely, $F$ is a commutative formal group law of rank one, or more
simply a formal group law. In other words,
\[
  F(t,0)= F(0,t)=t, \quad F(t,u)= F(u,t),\quad \text{and}\quad F(F(t,u),v)=
  F(t, F(u,v)),
\]
and $F$ is called the formal group law of $\BBh$.

\subsection{Examples}\label{ssec:fglex}

The fundamental example of an oriented cohomology theory is given by the Chow
ring, denoted by $\Chow^*$. If $X$ is a smooth, quasi-projective variety and
$L$ and $M$ are line bundles on $X$, then $c_1(L\otimes M) = c_1(L)+ c_1(M)$,
so the formal group law of $\Chow^*$ is the \emph{additive formal group law:}
\[
  F_{add}(t,u)= t+u \in \BBZ[[t,u]].
\]
Another example of an oriented cohomology theory with formal group law equal
to $F_{add}$ is the even part of de Rahm cohomology.

If $R$ is any ring with identity and $\kappa \in R\setminus \{0\}$, then
\[
  F(t,u)= t+u-\kappa tu \in R[[t,u]]
\]
is a formal group law, called a \emph{multiplicative formal group law.} For
example, the functor $K(\cdot)$, where $K(X)$ is the Grothendieck group of
vector bundles on the smooth variety $X$, is an oriented cohomology theory
whose formal group law is multiplicative with $\kappa=1$, namely $F(t,u)= t+u
-tu\in \BBZ[[t,u]]$.

Suppose $R$ is an integral domain. One checks that a formal group law,
$F(t,u)\in R[[t,u]]$, is a polynomial if and only if $F(t,u)= t+u-\kappa tu$
for some $\kappa\in R$, so if and only if $F$ is the additive group law or a
multiplicative formal group law.

A universal example of an oriented cohomology theory is the algebraic
cobordism functor, $\Omega^*$, constructed by Levine and Morel. The ring
$\Omega^*(\pt)$ is isomorphic to the Lazard ring, $\BBL= \BBZ[a_{i,j}\mid
i,j\in \BBN]/J$, where the $a_{i,j}$ are indeterminates and $J$ is the ideal
generated by the relations among the variables $a_{i,j}$ imposed by the
requirement that if $f_{i,j}= a_{i,j}+ J$, then $F(t,u) = \sum_{i,j} f_{i,j}
t^i u^j$ defines a formal group law in $\BBL$. Moreover, the formal group law
of $\Omega^*(\pt)$ is the \emph{universal formal group law} on $\BBL$:
\[
  F_{univ}(t,u) = \sum_{i,j} f_{i,j} t^i u^j.
\]
The oriented cohomology theory $\Omega^*$ has the universal property that if
$\BBh$ is any oriented cohomology theory, then there is a unique natural
transformation $\Omega^* \to \BBh$ that commutes with push-forwards of
projective morphisms.

Suppose $R$ is a commutative graded ring with identity and $F$ is a formal
group law on $R$. Define $\BBh$ to be the composition of $\Omega^*$ with the
base change functor $R \otimes_{\BBL} (\cdot)$, where $R$ is an $\BBL$-algebra
via the unique ring homomorphism $\psi\colon \BBL\to R$ such that $F(t,u)=
\sum_{i,j} \psi(f_{i,j}) t^i u^j$. Then $\BBh$ is an oriented cohomology
theory with $\BBh(\pt)= R \otimes_{\BBL}\BBL \cong R$ and formal group law $F$
\cite[1.1]{levinemorel:algebraic}.

\subsection{(Algebraic) equivariant oriented cohomology theories \cite[\S2]
  {calmeszainoullinezhong:equivariant}}

Let $\GrC$ be the subcategory of $\SmC$ consisting of linear algebraic groups
and group homomorphisms. Define $\GrSmC$ to be the category with objects all
pairs $(H,X)$, where $H\in \GrC$, $X\in \SmC$, and $X$ is a $H$-variety. Given
$(H,X)$ and $(H',X')$, a morphism in $\GrSmC$ is a pair $(\varphi,f)$, where
$\varphi\colon G\to G'$ is a morphism in $\GrC$, $f\colon X\to X'$ is a
morphism in $\SmC$, and the obvious compatibility condition intertwining the
$H$-action on $X$ and the $H'$-action on $X'$ holds, namely $f(g\cdot x)=
\varphi(g)\cdot f(x)$ for $(g,x)\in H\times X$ (or equivalently, the
corresponding diagram commutes).

An \emph{equivariant oriented cohomology theory} is an additive, contravariant
functor from the category $\GrSmC$ to the category of commutative rings with
identity, endowed with push-forward maps for equivariant projective morphisms
(for a fixed algebraic group), restriction maps for morphisms of algebraic
groups (for a fixed smooth variety), and a ``total characteristic class'' map
(for a fixed algebraic group), that satisfy a collection of natural axioms
\cite[\S2] {calmeszainoullinezhong:equivariant}.

Suppose $\BBh_\star(\cdot)$ is an equivariant oriented cohomology theory. 
\begin{itemize}
\item On objects, the functor $\BBh_\star(\cdot)$ carries $(H, X)$ to the
  commutative ring $\BBh_H(X)$.
\item The subcategory of $\GrSmC$ with objects whose first component is equal
  to a fixed group $H$ with morphisms $(\id_H, f)$ is canonically isomorphic
  to the category of $H$-varieties. The restriction of $\BBh_\star(\cdot)$ to
  this subcategory defines a contravariant functor, denoted by
  $\BBh_H(\cdot)$, from the category of $H$-varieties to the category of
  commutative rings with identity. If $f\colon X\to Y$ is an $H$-equivariant
  morphism of $H$-varieties, then $\BBh_{H}(f)$ is denoted simply by $f^*$, so
  $f^*\colon \BBh_H(Y) \to \BBh_H(X)$ is a ring homomorphism. If $f$ is
  projective, then the push-forward map is denoted simply by $f_*$, so
  $f_*\colon \BBh_H(X) \to \BBh_H(Y)$. The mapping $f_*$ is assumed to be
  $\BBh_*(Y)$-linear, but not a ring homomorphism.

\item For a homomorphism of algebraic groups $\psi\colon H\to H'$, let
  $\res_\psi$ be the ``restriction'' functor from $H'$-varieties to
  $H$-varieties, where $H$ acts on a $H'$-variety via the pull-back through
  $\psi$. Then $(\psi, \id_X)\colon (H,X) \to (H',X)$ is a morphism in
  $\GrSmC$ and $\BBh(\psi, \id_X)\colon \BBh_{H'}(X) \to
  \BBh_{H}(\res_\psi(X))$. Abusing notation slightly, set $\res_\psi =
  \BBh(\psi, \id_X)$. These are the restriction maps in the definition of
  $\BBh_\star(\cdot)$.

\item By definition, $\BBh_\star(\cdot)$ comes equipped with ``total
  characteristic classes:'' For $(H,X)\in \GrSmC$, there is a group
  homomorphism
  \[
    c^H\colon K_H(X)\to \CU\big(\BBh_H(X)[t]\big)
  \]
  where $K_H(X)$ is considered as an additive group, $\BBh_H(X)[t]$ is the
  polynomial ring in the variable $t$, and $\CU\big(\BBh_H(X)[t]\big)$ is the
  group of units in $\BBh_H(X)[t]$. The group homomorphism $c^H$ is assumed to
  be natural in $X$, and for the class of a $H$-equivariant vector bundle, say
  $\CE\in K_H(X)$, the coefficient of $t^k$ in $c^H(\CE)$ is denoted by
  $c^H_m(\CE)$ and is called the $k\th$ $H$-equivariant characteristic class.

\item When $H=\langle e\rangle$, the full subcategory of smooth $\langle
  e\rangle$-equivariant varieties is canonically isomorphic to $\SmC$. One of
  the axioms is that $\BBh_{\langle e \rangle}(\cdot)$ is an oriented
  cohomology theory in the sense of Levine and Morel, but with values in the
  category of commutative rings, not commutative graded rings. By definition,
  the formal group law of $\BBh_\star(\cdot)$ is the formal group law of
  $\BBh_{\langle e\rangle}(\cdot)$, and the $\BBh_{\langle
    e\rangle}(\cdot)$-Chern classes are identified with the $\langle
  e\rangle$-equivariant characteristic classes.
  
\item Let $\imath\colon \langle e\rangle \to T$ denote the inclusion. Since
  the composition $\langle e\rangle \xrightarrow{\, \imath\,} T
  \xrightarrow{\, a\,} \langle e\rangle$ is the identity and
  $\res_\imath\res_a =\res_{a\imath}$, it follows that $\res_a$ is injective
  and $\res_\imath$ is surjective, for any $X$. For example, $\res_a\colon
  \BBh_{\langle e\rangle}(\pt) \to \BBh_T(\pt)$ is injective and $\res_\imath
  \colon \BBh_T(\pt)\to \BBh_{\langle e\rangle}(\pt)$ is surjective.
   
\end{itemize}

\subsection{Standing assumptions}\label{ssec:asm}

A reduced expression for an element $z\in W$ is a sequence $I=(s_1, \dots,
s_p)$ of simple reflections such that $z=s_1\dotsm s_p$ and $\ell(z)=p$. In
the rest of this paper we fix a set of ``$L$-compatible'' reduced expressions
for the elements of $W$, that is we choose a fixed reduced expression, $I_w$,
for each $w\in W^L$ and a fixed reduced expression, $I_v$, for each $v\in
W_L$, and define $I_{wv}= I_w \amalg I_v$ to be the concatenation of $I_w$ and
$I_v$. It follows from \cite[p.37 Exer.~3]{bourbaki:groupes} that $\{\,
I_{wv}\mid w\in W^L, v\in W_L\,\}$ is a set of reduced expressions for the
elements of $W$.

For the rest of this paper $\BBh_\star(\cdot)$ denotes an equivariant oriented
cohomology theory,
\[
  \text{$\BBh_H= \BBh_H(\pt)$ for any algebraic group $H$,} \quad
  R=\BBh_{\langle e\rangle}, \quad \text{and}\quad S=\BBh_T.
\]
We use $R$ and $S$ to help simplify the notation -- $R$ is the coefficient
ring for the oriented cohomology theory $\BBh_{\langle e\rangle} (\cdot)$ and
$S$ is the coefficient ring for the $T$-equivariant theory $\BBh_T
(\cdot)$. The ring homomorphism $\res_a\colon R\to S$ is injective and
determines an $R$-module structure on $S$, and the ring homomorphism
$\res_i\colon S\to R$ is surjective and determines an $S$-module structure on
$R$. Unless otherwise indicated, we consider $S$ and $R$ with these module
structures.

We assume that either $2$ is not a zero divisor in $R$, or that the derived
group of $G$ does not contain an irreducible factor of type $C$. With this
assumption we can freely use the results in
\cite{calmeszainoullinezhong:coproduct} and \cite{calmeszainoullinezhong:push}
as needed.

\subsection{Completion}\label{ssec:comp}

As explained in \cite[\S2] {calmeszainoullinezhong:equivariant}, for a smooth
$H$-variety, $X$, the $H$-equivariant characteristic classes define the
``$\gamma$-filtration'' in $\BBh_H(X)$, as usual in the theory of
$\lambda$-rings, in which the $k\th$ term is denoted by $\gamma^k\BBh_H(X)$.

Let $\widehat{\BBh_H}$ denote the completion of the ring $\BBh_H$ with respect
to the $\gamma$-filtration. For example, it follows from \cite[Lem.~1.1.3]
{levinemorel:algebraic} that $\gamma^0 \BBh_{\langle e\rangle} = \BBh_{\langle
  e\rangle}$ and $\gamma^1 \BBh_{\langle e\rangle} = 0$, and so
\begin{equation}
  \label{eq:he}
  \widehat R = \widehat{\BBh_{\langle e\rangle}} = \BBh_{\langle e\rangle}=
  R.  
\end{equation}
It is shown in \cite[\S2] {calmespetrovzainoulline:invariants} that if
$\{\omega_1, \dots, \omega_n\}$ is a basis of $\Lambda$, then
\[
  \Shat= \widehat{\BBh_T} \cong \widehat{\BBh_{\langle e\rangle}}
  [[x_{\omega_1}, \dots, x_{\omega_n}]] = R[[x_{\omega_1}, \dots,
  x_{\omega_n}]]
\]
is a formal power series ring (see \cref{ssec:S+}).

Now define a contravariant functor, $\BBhhat_\star(\cdot)$, from $\GrSmC$ to
the category of commutative rings on objects by ``base change:'' For a smooth
$H$-variety $X$,
\[
  \BBhhat_H(X) = \widehat{\BBh_H} \otimes_{\BBh_H} \BBh_H(X).
\]
\begin{itemize}
\item If $f\colon X\to Y$ is an $H$-equivariant morphism of $H$-varieties,
  define $\fhat^*= \id\otimes f^*$. If $f$ is projective, define $\fhat_*=
  \id\otimes f_*$. Then
 \[
   \fhat^*\colon \BBhhat_H(Y)\to \BBhhat_H(X) \quad\text{and}\quad
   \fhat_*\colon \BBhhat_H(X)\to \BBhhat_H(Y).
 \]
  
\item If $\psi\colon H\to H'$ is a homomorphism of algebraic groups and $X'$
  is a $H'$-variety, define $\reshat_\psi= \id\otimes \res_\psi$. Then
  \[
    \reshat_\psi \colon \BBhhat_{H'}(X')\to \BBhhat_H(\res_\psi(X')) =
    \BBhhat_H(X').
  \]
  
\item The inclusion $\BBh_H(X) \cong 1\otimes \BBh_H(X)\subseteq \BBhhat_H(X)$
  extends naturally to a ring homomorphism $\BBh_H(X)[t] \to \BBhhat_H
  (X)[t]$, which in turn restricts to a group homomorphism $\rho\colon
  \CU\big(\BBh_H(X)[t] \big) \to \CU\big( \BBhhat_H (X)[t] \big)$.  Define
  $\chat^H=\rho \circ c^H$. Then
  \[
    \chat^H\colon K_H(X) \to \CU\big( \BBhhat_H (X)[t] \big) .
  \]
\end{itemize}

The next lemma is a stronger version of \cite[Rmk.~2.3]
{calmeszainoullinezhong:equivariant}.


\begin{lemma}\label{lem:RRhat}
  The functor $\BBhhat_\star(\cdot)$ endowed with the push-forward maps,
  $\fhat_*$, for equivariant projective morphisms, restriction maps,
  $\reshat_\psi$, for morphisms of algebraic groups, and total characteristic
  class homomorphisms, $\chat^H$, is an equivariant oriented cohomology theory
  with $\BBhhat_{\langle e\rangle}(\cdot)= \BBh_{\langle e\rangle}(\cdot)$
  that is Chern complete over the point for $T$.
\end{lemma}

\begin{proof}
  It is straightforward to check that the axioms all hold. The assertion that
  $\BBhhat_{\langle e\rangle}(\cdot)= \BBh_{\langle e\rangle}(\cdot)$ follows
  from \cref{ssec:comp}\cref{eq:he}
\end{proof}

\subsection{Bott-Samelson resolutions and cohomology classes
  \cite[\S7,8]{calmeszainoullinezhong:equivariant}}\label{ssec:bs}

For a tuple $I=(s_1, \dots, s_p)$ of simple reflections such that $s_p\notin
W_L$, define
\[
  Z^P_I = P_{s_1} \times^B \dotsm \times^B P_{s_{p-1}} \times^B (P_{s_p}P /P)
\]
and let
\[
  q^P_I\colon Z^P_I\to G/P
\]
be the projection given by multiplication of the factors. Notice that because
$s_p\notin W_L$, the natural projection $P_{s_p}/B \to P_{s_p}P/P$ is an
isomorphism. It follows that the natural projection from $Z^B_{I}$ to
$Z^P_{I}$ is an isomorphism and that the diagram
\[
  \xymatrix{ Z^B_{I} \ar[r]_{\cong}^-{\pi_I} \ar[d]^{q^B_{I}} & Z^P_{I}
    \ar[d]^{q^P_{I}} \\ G/B \ar[r]^{\pi} &G/P}
\]
commutes. 

For an algebraic subgroup, $H\subseteq T$, define the \emph{Bott-Samelson
  class}
\[
  \xi^P_{I.H} = \pi_* (q^B_{I})_*(1) = (q^P_{I})_* (\pi_I)_*(1)\in
  \BBh_H(G/P),
\]
where $1$ denotes the identity in $\BBh_H(Z^B_{I})$. If $P=B$, then there is
no condition on $I$ and we set $\xi_{I,H}= \xi_{I,H}^B$. If $s_p\notin W_L$,
then $\xi_{I,H}$ and $\xi^P_{I,H}$ are both defined and $\pi_*(\xi_{I,H})=
\xi^P_{I,H}$. If the ambient group is clear from context, it is not be
included in the notation.

If $I=I_w$ for some $w\in W$, then we write simply $w$ instead of $I_w$ in the
notation for Bott-Samelson classes. Thus,
\[
  \xi^P_w = \xi^P_{I_w} = \xi^P_{I_w,H} \quad\text{and}\quad \xi_w=\xi^B_w
  =\xi^B_{I_w,H}.
 \]
 With this notation, $\pi_{*}(\xi_w)= \xi^P_w$ for $w\in W^L$.

\subsection{Duality and the scalar pairing}\label{ssec:dual}

For an algebraic group, $H$, and a smooth, projective $H$-variety, $X$, the
canonical morphism $a_X\colon X\to \pt$ is projective, so $(a_X)_*\colon
\BBh_H(X) \to \BBh_H(\pt)$ is defined. Define a pairing
\[
  \langle \,\cdot\,,\,\cdot\, \rangle\colon \BBh_H(X) \otimes_S \BBh_H(X)
  \to S \quad\text{by}\quad \langle \xi, \xi'\rangle = (a_X)_*(\xi \xi').
\]
We call this pairing the \emph{scalar pairing} because it reduces to the
scalar product pairing between singular cohomology and singular homology given
by evaluation. The scalar pairing is also called the push-forward pairing.

Note that the scalar pairing,$\langle \,\cdot\,,\,\cdot\, \rangle$, depends on
$\BBh_\star(\cdot)$, $H$, and $X$. We will state the domain of the pairing
when there is any chance of confusion.

\begin{theorem}\label{thm:bssp}
  The variety $G/P$ is ``$T$-equivariantly formal'' in the sense that there is
  a natural $S$-module isomorphism
  \begin{equation*}\label{eq:m}
    f\colon \BBh_T(G/P) \xrightarrow{\ \cong\ } S\otimes_R \BBh_{\langle e
      \rangle}(G/P) \quad\text{such that}\quad f(\xi^P_{w,T})= 1\otimes
    \xi^P_{w, \langle e\rangle} 
  \end{equation*}
  for $w\in W^L$, the Bott-Samelson classes $\{\, \xi^P_w\mid w\in W^L\,\}$
  form an $S$-basis of $\BBh_T(G/P)$, and the scalar pairing on $\BBh_T(G/P)$ is
  non-degenerate.
\end{theorem}

\begin{proof}
  It is shown in \cite[Lem.~11.1] {calmeszainoullinezhong:equivariant} that
  the homomorphism
  \begin{equation}
    \label{eq:c}
    \fhat\colon \Rhat \otimes_{\Shat} \BBhhat_T(G/P) \xrightarrow{\ \ }
    \BBhhat_{\langle e \rangle}(G/P) , 
  \end{equation}
  which satisfies $\fhat(\widehat r\otimes \widehat \zeta)= \widehat r \cdot
  \reshat_i (\widehat\zeta)$, is an $\Rhat$-algebra isomorphism, that the
  Bott-Samelson classes, $\{\, \xihat^P_{w, {\langle e \rangle}} \mid w\in
  W^L\,\}$, form an $\Rhat$-basis of $\BBhhat_{\langle e \rangle} (G/P)$, and
  that the scalar pairing on $\BBhhat_{\langle e \rangle} (G/P)$ is
  non-degenerate.
  
  It was shown in \cref{lem:RRhat} that $\Rhat=R$ and $\BBhhat_{\langle e
    \rangle} (G/P)= \BBh_{\langle e \rangle} (G/P)$, so the Bott-Samelson
  classes, $\{\, \xi^P_{w, \langle e \rangle}\mid w\in W^L\,\}$, form an
  $R$-basis of $\BBh_{\langle e \rangle} (G/P)$. Next, it follows from the
  assumption that restriction homomorphisms commutes with proper push-forwards
  that $\res_i\big(\xi^P_{w,T} \big) = \xi^P_{w, \langle e\rangle}$, so $\fhat
  \big(1\otimes \xihat^P_{w,T} \big)= \xihat^P_{w,\langle
    e\rangle}$. Finally, one checks that $\Rhat \otimes_{\Shat}
  \BBhhat_T(G/P)\cong R \otimes_{S} \BBh_T(G/P)$, and so \cref{eq:c} can be
  identified with the $R$-linear isomorphism
  \begin{equation}
    \label{eq:b}
    R \otimes_{S} \BBh_T(G/P) \xrightarrow{\ \cong \ }
    \BBh_{\langle e \rangle}(G/P) \quad\text{given by}\quad r\otimes
    \xi^P_{w,T}\mapsto r\xi^P_{w, \langle e \rangle}.
  \end{equation}

  Now apply the base change $S\otimes_R -$ to \cref{eq:b} and identify
  $S\otimes_R \big( R\otimes_S \BBh_T(G/P) \big)$ with $\BBh_T(G/P)$ to get
  the $S$-algebra isomorphism in the statement of the theorem
  \begin{equation*}
    \label{eq:x}
    f\colon \BBh_T(G/P) \xrightarrow{\ \cong\ } S\otimes_R \BBh_{\langle e
      \rangle}(G/P) \quad\text{with}\quad f(\xi^P_{w,T})=  1\otimes
    \xi^P_{w, \langle e \rangle}. 
  \end{equation*}
  Since $\{\, \xi^P_{w, \langle e \rangle} \mid w\in W^L\,\}$ is an $R$-basis
  of $\BBh_{\langle e \rangle} (G/P)$, it follows that $\{\, \xi^P_{w, T}\mid
  w\in W^L\,\}$ is an $S$-basis of $\BBh_T(G/P)$. To complete the proof,
  notice that $f$ intertwines the scalar pairing on $\BBh_T(G/P)$ with the
  extension of the scalar pairing on $\BBh_{\langle e \rangle}(G/P)$ to
  $S\otimes_R \BBh_{\langle e \rangle}(G/P)$, because restriction
  homomorphisms, $\res$, commute with push-forwards, $a_*$. It then follows
  that the scalar pairing on $\BBh_T(G/P)$ is non-degenerate, because the
  scalar pairing on $\BBh_{\langle e \rangle}(G/P)$ is non-degenerate.
\end{proof}

\subsection{} Using the theorem, as well as the conventions in \cref{ssec:bs},
define $\{\, \xi_P^w\mid w\in W^L\,\}$ to be the $S$-basis of $\BBh_T(G/P)$
that is dual to the Bott-Samelson basis with respect to the scalar
pairing. Then
\[
  \langle \xi_w^P, \xi_P^{w'} \rangle = \delta_{w,w'} \quad\text{for $w,w'\in
    W^L$.}
\]
As usual, set $\xi^w= \xi_B^w$ for $w\in W$.

\subsection{The flag variety of $L$ and $\BBh_T(P/B)$}\label{ssec:fvL}

Consider the composition
\[
  \xymatrix{L/(B\cap L) \ar[r]_-{\cong} &P/B \ar@{^(->}[r]^-{i} & G/B.}
\]

For $v\in W_L$ we have the smooth variety $Z^{B\cap L}_{I_v}$ and the
projection $q^{B\cap L}_v\colon Z^{B\cap L}_{I_v} \to L/(B\cap L)$. There is a
canonical isomorphism, $Z^{B\cap L}_{I_v} \cong Z^{B}_{I_v}$, and the diagram
\[
  \xymatrix{ Z^{B\cap L}_{I_v} \ar[r]_{\cong} \ar[d]^{q^{B\cap L}_{I_v}}&
    Z^B_{I_v} \ar[d] \ar[dr]^{q^B_{I_v}} & \\
    L/(B\cap L) \ar[r]_-{\cong} &P/B \ar@{^(->}[r]^-{i} & G/B}
\]
commutes. Let $\xi^L_v$ be the Bott-Samelson class in $\BBh_T(P/B)$ determined
by $q^{B\cap L}_{I_v}$, that is, the image in $\BBh_T(P/B)$ of $1 \in
\BBh_T(Z^{B\cap L}_{I_v})$. Then $\{\, \xi^L_v\mid v\in W_L\,\}$ is an
$S$-basis of $\BBh_T(P/B)$ and by the naturality of push-forward maps we have
$i_*(\xi^L_v) = \xi_v$.

Let $\{\, \xi_L^v\mid v\in W_L\,\}$ be the dual basis of $\BBh_T(P/B)$ with
respect to the scalar pairing. If $z\in W$, then it follows by duality and the
projection formula that
\[
  i^*(\xi^z) = \sum_{v\in W_L} \langle i^*(\xi^z), \xi^L_v \rangle_{P/B} \,
  \xi_L^v = \sum_{v\in W_L} \langle \xi^z, i_{*}( \xi^L_v) \rangle_{G/B}
  \,\xi_L^v = \sum_{v\in W_L} \langle \xi^z, \xi_v \rangle_{G/B} \,\xi_L^v .
\]
Therefore, $i^*\colon \BBh_T(G/B) \to \BBh_T(P/B)$ is the projection given by
\begin{equation*}
  \label{eq:7}
  i^*(\xi^z)= \begin{cases} \xi^v_L&\text{if $z=v\in W_L$,}\\
    0&\text{if $z\notin W_L$.} \end{cases}
\end{equation*}

\subsection{The geometric Leray-Hirsch homomorphism}\label{ssec:glhh}

Define
\[
  j_g\colon \BBh_T(P/B) \to \BBh_T(G/B) \quad\text{by $j_g( \xi^v_L) = \xi^v$
    and $S$-linearity.}
\]
Then $j_g$ is a right inverse of the projection $i^*$.

The \emph{geometric Leray-Hirsch homomorphism} is the composition $\varphi_g
=\operatorname{mult} \circ (\pi^*\otimes j_g)$, where $\mult$ is the
multiplication map:
\begin{equation*}
  \label{eq:glhi}
  \vcenter{\vbox{    \xymatrix{ \BBh_T(G/P) \otimes_{S} \BBh_T(P/B)
        \ar[rr]^-{\pi^* \otimes j_g} \ar@/^30pt/[rrrr]^{\varphi_g}
        && \BBh_T(G/B) \otimes_{S}  \BBh_T(G/B) \ar[rr]^-{\operatorname{mult}}
        && \BBh_T(G/B) .} }}  
\end{equation*}
To simplify the notation in the arguments below, define
\[
  \zeta^w=\pi^*(\xi_P^w) \quad\text{for}\quad w\in W^L.
\]
Then
\begin{equation}
  \label{eq:19}
  \varphi_g(\xi_P^w \otimes \xi_L^v)= \pi^*(\xi_P^w) j_g( \xi_L^v) =\zeta^w
  \xi^v   \quad\text{for}\quad w\in W^L,\ v\in W_L .
\end{equation}

\section{The Leray-Hirsch isomorphism} \label{sec:lhi}

In this section and the next, the equivariant oriented cohomology theory,
$\BBh_\star(\cdot)$, is assumed to be Chern complete over the point for
$T$. We begin by reviewing the algebraic description of the $T$-equivariant
oriented cohomology theory of partial flag varieties developed by Calm\`es,
Lenart, Zainoulline, Zhong, and others, then we define the
algebraic Leray-Hirsch homomorphism, and finally we state the main theorem.

\subsection{Formal group algebras and the $T$-equivariant oriented cohomology
  of a point \cite[\S3] {calmeszainoullinezhong:equivariant} \cite[\S2]
  {calmespetrovzainoulline:invariants} }\label{ssec:S}

The study of $T$-equivariant oriented cohomology begins with the algebraic
description of $R=\BBh_T(\pt)$. Let $F$ be the associated formal group law and
let $R[[\Lambda]]$ be the ring of formal power series in a set of variables
$x_\lambda$, indexed by $\Lambda$ and considered as a topological ring with
the adic topology defined by the ideal generated by $\{\, x_\lambda\mid
\lambda\in \Lambda\,\}$. Define the \emph{formal group algebra of $\Lambda$}
to be the quotient
\[
  R[[\Lambda]]_F= R[[\Lambda]]/\CJ_F,
\]
where $\CJ_F$ is the closure of the ideal generated by $\{x_0\} \cup \{\,
x_{\lambda+\mu}-F(x_\lambda, x_\mu)\mid \lambda, \mu\in \Lambda\,\}$. Denote
the image of $x_\lambda$ in $R[[\Lambda]]_F$ also by $x_\lambda$.

Because $\BBh_\star(\cdot)$ is Chern complete, the rule that maps $x_\lambda$
to the first equivariant Chern class of the line bundle on $G/B$ such that $T$
acts on the fibre over $B$ by the character $\lambda$ defines an isomorphism
$S\xrightarrow{\,\sim\,} R[[\Lambda]]_F$ (see \cite[Theorem
3.3]{calmeszainoullinezhong:equivariant}).  From now on we identify $S$ with
$R[[\Lambda]]_F$ using this isomorphism.

The $W$-action on $\Lambda$ induces an $R$-linear action of $W$-action on $S$
with $z\cdot x_\lambda= x_{z(\lambda)}$ for $z\in W$ and $\lambda\in \Lambda$.

\subsection{}\label{ssec:S+}

It is important to know that $S$ has the structure of a ring of formal power
series and to identify the augmentation ideal.

Let $S_+$ be the ideal in $S$ generated by $\{\,x_\lambda \mid \lambda\in
\Lambda\,\}$. It is shown in \cite[Cor.~2.13]
{calmespetrovzainoulline:invariants} that if $\{\omega_1,\dots ,\omega_n\}$ is
a basis of $\Lambda$, then there is an isomorphism of topological $R$-algebras
\[
  \tau\colon S= R[[\Lambda]]_F\xrightarrow{\,\sim\,} R[[x_{\omega_1}, \dots
  ,x_{\omega_n}]]\quad\text{with}\quad \tau(x_{\omega_i})= x_{\omega_i}
  \text{ for $1\leq i\leq n$.} 
\]
Thus, $\tau$ carries $S_+$ isomorphically onto $R[[x_{\omega_1}, \dots
,x_{\omega_n}]]_+$. Therefore, every element $q\in S$ can be written uniquely
as $q=r\cdot 1+q_+$, where $r\in R$ and $q_+\in S_+$. Moreover, $q$ is a unit
in $S$ if and only if $r$ is a unit in $R$.

\subsection{}\label{ssec:ualpha}

Suppose $\lambda\in \Lambda$. Using the defining relations in $S$ and that $F$
is a formal group law (see \cref{ssec:oct}\cref{eq:21}), we have
\[
  0= x_{\lambda-\lambda} =F(x_\lambda, x_{-\lambda}) =x_\lambda+ x_{-\lambda}
  - x_\lambda x_{-\lambda} G(x_\lambda, x_{-\lambda}),
\]
so $G(x_{\lambda}, x_{-\lambda}) = x_\lambda\inverse +
x_{-\lambda}\inverse$. To simplify the notation, define
\begin{equation*}
  \label{eq:22}
  \kappa_\lambda = G(x_{\lambda}, x_{-\lambda}) = x_\lambda\inverse +
  x_{-\lambda}\inverse \quad\text{and}\quad u_\lambda = x_\lambda /
  x_{-\lambda} =  -1+\kappa_\lambda x_\lambda.
\end{equation*}
Then $\kappa_\lambda$ and $u_\lambda$ are both elements in $S$, and
$u_\lambda$ is a unit in $S$.

\subsection{Formal affine Demazure algebras and their duals
  (\cite{calmeszainoullinezhong:equivariant},
  \cite{calmeszainoullinezhong:coproduct},
  \cite{calmeszainoullinezhong:push})} \label{ssec:alg}

The algebraic description of the $T$-equivariant oriented cohomology of
partial flag varieties is encapsulated in the commutative diagram of
$S$-algebras and $S$-algebra homomorphisms:
\begin{equation}
  \label{eq:fda}
  \vcenter{ \vbox{  \xymatrix{ \BBh_T(G/B) \ar[d]_{\Theta}^{\cong}
        \ar[r]^-{\pi_*} & \BBh_T(G/P) \ar[d]_{\Theta_P}^{\cong}
        \ar[r]^-{\pi^*} & \BBh_T(G/B)  \ar[d]_{\Theta}^{\cong} \\
        \bfD^* \ar@{->>}[r]^-{Y_P\bullet(\cdot)} \ar@{^(->}[d] &
        (\bfD^*)^{W_L} \ar@{^(->}[d]  \ar@{^(->}[r]& \bfD^* \ar@{^(->}[d]\\
        Q_W^* \ar@{->>}[r]^-{Y_P\bullet(\cdot)} & (Q_W^*)^{W_L} \ar@{^(->}[r]
        & Q_W^* ,}}} 
\end{equation}
where the notation (adapted from \cite{kostantkumar:equivariant} and
\cite{calmeszainoullinezhong:equivariant}) is as follows.
\begin{enumerate}
\item $Q$ is the localization of $S$ at $\{\, x_\beta\mid \beta\in \Phi
  \,\}$. Define $x_L$ in $S$ (and $Q$) by 
  \[
    x_L= \prod_{\beta\in \Phi_L^+} x_{-\beta}.
  \]
  For example, if $P=G$, then $x_G= \prod_{\beta\in \Phi^+}
  x_{-\beta}$. \label{it:alg1}
  
\item $Q_W$, the \emph{twisted group algebra,} is defined to be the free
  $Q$-module $Q\rtimes R[W]$ with basis $\{\, \delta_z\mid z\in W\,\}$. $Q_W$
  is also an $R$-algebra with multiplication determined by the rule
  \[
    (q\delta_z)\cdot (q'\delta_{z'})=qz(q')\delta_{zz'}, \quad\text{for
      $q,q'\in Q$ and $z, z'\in W$.}
  \]

  Define $Y_P\in Q_W$ by
  \[
    Y_P= \sum_{v\in W_L} \delta_v x_L\inverse.
  \]
  If $s=s_\alpha$ is a simple reflection define $Y_s=Y_{P_s}$, so
  \[
    Y_s= (\delta_e +\delta_s) x_{-\alpha}\inverse = x_{-\alpha}\inverse
    \delta_e +x_{\alpha}\inverse \delta_s.
  \]
  If $I=(s_1, \dots, s_p)$ is a sequence of simple reflections in $W$ define
  $Y_I=Y_{s_1} \dotsm Y_{s_p}$. For $z\in W$ set $Y_z=Y_{I_z}$. Notice that
  $Y_{wv} =Y_wY_v$ if $w\in W^L$ and $v\in W_L$. It is not hard to check that
  $\{\, Y_z\mid z\in W\,\}$ is a $Q$-basis of $Q_W$.

  Taking $P=P_s$ in diagram \cref{eq:fda} one sees that the elements $Y_s$
  encode the push-pull operators in equivariant oriented cohomology. These
  elements are called \emph{push-pull elements.} In another direction, the
  Demazure operators on the coordinate ring of the Lie algebra of $T$ are
  encoded by the \emph{Demazure elements}, which are defined by
  \[
    X_s= x_\alpha\inverse (\delta_e - \delta_s)= x_\alpha\inverse \delta_e
    -x_\alpha\inverse \delta_s
  \]
  for a simple reflection $s=s_\alpha \in W$. If $I=(s_1, \dots, s_p)$ is a
  sequence of simple reflections in $W$, define $X_I=X_{s_1} \dotsm X_{s_p}$,
  and for $z\in W$ set $X_z=X_{I_z}$. As with the $Y$'s, $X_{wv} =X_wX_v$ if
  $w\in W^L$ and $v\in W_L$, and $\{\, X_z\mid z\in W\,\}$ is a $Q$-basis of
  $Q_W$. \label{it:alg2}

\item $Q_W^* = \Hom_Q(Q_W,Q)$ is the $Q$-dual of $Q_W$. Let $\{\, Y_z^*\mid
  z\in W\,\}$ and $\{\, X_z^*\mid z\in W\,\}$ be the bases of $Q_W^*$ dual to
  the bases $\{\, Y_z\mid z\in W\,\}$ and $\{\, X_z\mid z\in W\,\}$ of $Q_W$,
  respectively.

  The $\bullet$-action of $Q_W$ on $Q_W^*$ is defined by
  \[
    (h\bullet f) (h')=f(h'h), \quad h, h'\in Q_W, f\in Q_W^*.
  \]
  The map $Y_P\bullet(\cdot)$ in diagram \cref{eq:fda} denotes the left
  $\bullet$-action of $Y_P$ on $Q_W^*$, namely $f\mapsto Y_P\bullet f$ for
  $f\in Q_W^*$. Restricting the $\bullet$-action to the basis
  $\{\,\delta_z\mid z\in W\,\}$ of $Q_W$ defines an action of $W$ on $Q_W^*$
  by $Q$-algebra automorphisms. The space of $W_L$-invariants for this action
  is denoted by $(Q_W^*)^{W_L}$. By
  \cite[Lem.~6.5]{calmeszainoullinezhong:push}, $Y_P\bullet Q_W^* =
  (Q_W^*)^{W_L}$. \label{it:alg3}

\item The \emph{formal affine Demazure algebra of the based root system
    determined by $T\subseteq B\subseteq G$,} denoted by $\bfD$, is the
  $R$-subalgebra of $Q_W$ generated by $S$ and $\{\, X_s \mid \text{$s\in W$
    is a simple reflection}\,\}$. Since $\delta_e=1$ and $\delta_s= 1-x_\alpha
  X_s$, it is easy to see that $\{\, \delta_z\mid z\in W\,\} \subseteq \bfD$,
  and using the results in \cite[\S6]{HMSZ:formal} it is straightforward to
  check that $\bfD$ is a free $S$-module with bases $\{\, X_z\mid z\in W\,\}$
  and $\{\, Y_z\mid z\in W\,\}$. \label{it:alg4}

\item $\bfD^* = \Hom_S (\bfD, S)$ is the $S$-dual of $\bfD$. We consider
  $\bfD^*$ as a commutative $S$-algebra with the product induced from the
  coproduct on $\bfD$ defined in \cite[\S8, \S11]
  {calmeszainoullinezhong:coproduct}.  Since $\bfD$ is a free $S$-submodule of
  $Q_W$ that contains the $Q$-basis $\{\,\delta_z\mid z\in W\,\}$ of $Q_W$, it
  follows that every $S$-module homomorphism $\bfD \to S$ extends uniquely to
  a $Q$-module homomorphism $Q_W\to Q$. We identify $\bfD^*$ with an
  $S$-submodule of $Q_W^*$ using this correspondence. It is not hard to see
  that $\bfD^*$ is a $W$-stable subset of $Q_W^*$ (with respect to the
  $\bullet$-action of $W$). Moreover, by \cite[Lem.~11.7]
  {calmeszainoullinezhong:push}, $Y_P\bullet \bfD^* =
  (\bfD^*)^{W_L}$. \label{it:alg5}

\item The inclusion of $T$-fixed points $(G/B)^T\hookrightarrow G/B$ induces
  an $S$-algebra homomorphism $\BBh_T(G/B) \to \BBh_T( (G/B)^T)$. Because
  $(G/B)^T=\{\, zB\mid z\in W\,\}$, we see that $\BBh_T ((G/B)^T)$ is
  isomorphic to $\bigoplus_{z\in W} \BBh_T(zB)$, and so may be identified with
  the $S$-algebra of functions $W\to S$, with pointwise operations, and hence
  with $S_W^*= \Hom_S(S_W,S)$. This leads to an $S$-algebra homomorphism
  $\BBh_T(G/B) \to S_W^*$. Using the basis $\{\,\delta_z\mid z\in W\,\}$ of
  $Q_W$ we identify $Q_W^*$ with the $Q$-algebra of functions $W\to Q$ and get
  an embedding of $S$-algebras, $S_W^* \hookrightarrow Q_W^*$.

  Composing the maps in the preceding paragraph gives an $S$-algebra
  homomorphism $\BBh_T(G/B) \to Q_W^*$. By
  \cite[\S10]{calmeszainoullinezhong:push} this mapping is injective with
  image equal to $\bfD^*$. The isomorphism $\Theta$ in diagram \cref{eq:fda}
  is obtained from the embedding $\BBh_T(G/B) \hookrightarrow Q_W^*$ by
  restricting the codomain to $\bfD^*$. The $S$-algebra homomorphism
  $\Theta_P$ is defined similarly.

\item Recall that $I\rev$ denote the reverse of sequence $I$ and let $f_e\in
  Q_W^*$ denote the $Q$-linear homomorphism with $f_e(\delta_e)=1$ and
  $f_e(\delta_z)=0$ if $z\ne e$. Then by
  \cite[Lem.~8.8]{calmeszainoullinezhong:equivariant},
  \[
    \Theta(\xi_z) = Y_{I_z\rev} \bullet x_G f_e \quad\text{and}\quad
    \Theta_P(\xi^P_w) = Y_PY_{I_w\rev} \bullet x_G f_e
  \]
  for $z\in W$ and $w\in W^L$. Thus, $\{\, Y_{I_z\rev}\bullet x_G f_e\mid z\in
  W\,\}$ is an $S$-basis of $\bfD^*$ and $\{\, Y_P Y_{I_w\rev}\bullet x_G
  f_e\mid w\in W^L\,\}$ is an $S$-basis of $(\bfD^*)^{W_L}$.

  Finally, by \cite[Lem.~14.3] {calmeszainoullinezhong:push} and
  \cite[Lem.~11.6] {calmeszainoullinezhong:push}, the sets $\{\, Y_P
  X_{I_w\rev}\bullet x_G f_e\mid w\in W^L\,\}$ and $\{\, X_w^*\mid w\in
  W^L\,\}$ are $S$-bases of $(\bfD^*)^{W_L}$. On the other hand, if $w\in
  W^L$, then in general $Y_w^*\notin (\bfD^*)^{W_L}$.
\end{enumerate}

\subsection{Duality \cite{calmeszainoullinezhong:push}} \label{ssec:algdual1}

An application of the diagram \cref{ssec:alg} \cref{eq:fda} is the algebraic
description of the scalar pairings on $\BBh_T(G/B)$ and $\BBh_T(G/P)$. By
transport of structure via the isomorphisms $\Theta$ and $\Theta_P$, the
non-degenerate $S$-bilinear forms $\langle\, \cdot\,, \, \cdot\,
\rangle_{G/B}$ and $\langle\, \cdot\,, \, \cdot\, \rangle_{G/P}$ determine
non-degenerate $S$-bilinear forms $\langle\, \cdot\,, \, \cdot\,
\rangle_{\bfD^*}$ and $\langle\, \cdot\,, \, \cdot\, \rangle_{(\bfD^*)^{W_L}}$
on $\bfD^*$ and $(\bfD^*)^{W_L}$, respectively. 

Because $G/G = \pt$, $W_G=W$, and $(\bfD^*)^{W} \cong S$, the mapping
\[
  (a_{G/B})_* \colon \BBh_T(G/B) \to \BBh_T(\pt)=S
\]
is given algebraically by the mapping
\[
  Y_G\bullet(\cdot) \colon \bfD^*\to (\bfD^*)^{W} \cong S.
\]
It follows that 
\[
  \langle f,f'\,\rangle_{\bfD^*} =Y_G\bullet (ff') \quad\text{for}\quad
  f,f'\in \bfD^*.
\]
By \cite[Thm.~15.5] {calmeszainoullinezhong:push}, $\{\, Y_z^*\mid z\in W\,\}$
is the $S$-basis of $\bfD^*$ that is dual to the basis $\{\,
Y_{I_z\rev}\bullet x_G f_e\mid z\in W\,\}$. Therefore
\begin{equation}
  \label{eq:20}
  \Theta(\xi^z)= Y_z^*\quad \text{for $z\in W$.}
\end{equation}

For $G/P$ and $(\bfD^*)^{W_L}$, the projection $a_{G/B}\colon G/B\to G/G=\pt$
factors as $a_{G/B}= a_{G/P} \circ \pi$, the element $Y_G\in Q_W$ factors as
$Y_{G,L}Y_P$, where $Y_{G,L}= \sum_{w\in W^L} \delta_w x_G\inverse x_L$, and
there is a commutative diagram
\begin{equation*}
  \vcenter{\vbox{  \xymatrix{\BBh_T(G/B) \ar[d]_{\Theta}^{\cong}
        \ar@{->>}[r]^-{\pi_*} & 
        \BBh_T(G/P) \ar[d]_{\Theta_P}^{\cong} \ar@{->>}[r]^-{a_*} &
        \BBh_T(\pt)  \ar[d]_{\Theta_G}^{\cong}\\ 
        \bfD^* \ar@{->>}[r]^-{Y_P\bullet(\cdot)} & (\bfD^*)^{W_L}
        \ar@{->>}[r]^-{Y_{G,L} \bullet(\cdot)}& \bfD^* .} }}
\end{equation*}
It follows that
\[
  \langle f,f'\,\rangle_{(\bfD^*)^{W_L}} =Y_{G,L}\bullet (ff')
  \quad\text{for}\quad f,f'\in (\bfD^*)^{W_L}.
\]

By \cite[Thm.~15.6] {calmeszainoullinezhong:push}, the bases $\{\, Y_P
X_{I_w\rev}\bullet x_G f_e\mid w\in W^L\,\}$ and $\{\, X_w^*\mid w\in W^L\,\}$
are dual bases of $(\bfD^*)^{W_L}$.

Define
\begin{equation}
  \label{eq:8}
  Z_w^* = \Theta_P( \xi_P^w) = \Theta(\zeta^w) \quad \text{for $w\in W^L$.}
\end{equation}
Then $\{\, Z_w^*\mid w\in W^L\,\}$ is the $S$-basis of $(\bfD^* )^{W_L}$ dual
to the basis $\{\, Y_P Y_{I_w\rev}\bullet x_G f_e\,\}$. 

\subsection{} \label{ssec:algdual2}

By the projection formula in equivariant oriented cohomology, the mappings
$\pi_*$ and $\pi^*$ are adjoint with respect to the scalar products on
$\BBh_T(G/B)$ and $\BBh_T(G/P)$. It then follows that $\pi^* \pi_* \colon
\BBh_T(G/B) \to \BBh_T(G/B)$ is self-adjoint.

By transport of structure via $\Theta$ and $\Theta_P$ in
\cref{ssec:alg}\cref{eq:fda} one sees that
\begin{itemize}
\item the mapping $Y_P\bullet(\cdot) \colon \bfD^* \to (\bfD^*)^{W_L}$ and the
  inclusion $(\bfD^*)^{W_L} \hookrightarrow \bfD^*$ are adjoint:
  \[
    \langle \tilde f, f\rangle_{\bfD^*} = \langle \tilde f, Y_P\bullet f
    \rangle_{(\bfD^*)^{W_L}} \quad\text{for $\tilde f\in
      (\bfD^*)^{W_L}$ and $f\in \bfD^*$.}
  \]
  and
\item the mapping $Y_P\bullet(\cdot) \colon \bfD^* \to \bfD^*$ is
  self-adjoint:
  \[
    \langle Y_P\bullet f, f' \rangle_{\bfD^*}= \langle f, Y_P\bullet
    f'\rangle_{\bfD^*} \quad\text{for $f,f'\in \bfD^*$}.
  \]
\end{itemize}

\subsection{Executive Summary}

To help streamline the notation and save bullets, for $z\in W$ define
$Y_z^{\times}\in \bfD^*$ by
\[
  Y_z^{\times}= Y_{I_z\rev} \bullet x_Gf_e \quad\text{and}\quad X_z^{\times}=
  X_{I_z\rev} \bullet x_Gf_e.
\]

\begin{itemize}
\item The $S$-module $\BBh_T(G/B)$ has Bott-Samelson and dual Bott-Samelson
  bases, $\{\, \xi_z\mid z\in W\,\}$ and $\{\, \xi^z\mid z\in W\,\}$,
  respectively; the mapping $\Theta\colon \BBh_T(G/B)\to \bfD^*$ is an
  $S$-algebra isomorphism with
  \[
    \Theta(\xi_z)= Y_z^{\times}, \quad\text{and}\quad \Theta(\xi^z)= Y_z^*;
  \]
  and $\bfD^*$ has pairs of dual bases
  \begin{itemize}
  \item $\{\, Y_{z}^\times\mid z\in W\,\}$ and $\{\, Y_z^*\mid
    z\in W\,\}$, and
  \item $\{\, X_{z}^\times\mid z\in W\,\}$ and $\{\, X_z^*\mid z\in W\,\}$.
  \end{itemize}

\item The description of $\BBh_T(G/P)$ is similar, but less complete. The
  $S$-module $\BBh_T(G/P)$ has Bott-Samelson and dual Bott-Samelson bases,
  $\{\, \xi^P_w\mid w\in W^L\,\}$ and $\{\, \xi_P^w\mid w\in W^L\,\}$,
  respectively; the mapping $\Theta_P\colon \BBh_T(G/P)\to (\bfD^*)^{W_L}$
  is an $S$-algebra isomorphism with
  \[
    \Theta_P(\xi^P_w)= Y_P \bullet Y_w^{\times}, \quad\text{and}\quad
    \Theta_P(\xi_P^w)= Z_w^*;
  \]
  and $(\bfD^*)^{W_L}$ has pairs of dual bases
  \begin{itemize}
  \item $\{\, Y_P \bullet Y_w^{\times}\mid w\in W^L\,\}$ and $\{\, Z_w^*\mid
    w\in W^L\,\}$, and
  \item $\{\, Y_P\bullet X_{w}^\times\mid w\in W^L\,\}$ and $\{\, X_w^*\mid
    w\in W^L\,\}$.
  \end{itemize}
  The expansion of $Z_w^*$ in the $Y_z^*$-basis is not well understood in
  general. Some partial information is given in \cref{cor:zw}.

\item The algebraic descriptions of $\BBh_T(G/B)$ and $\BBh_T(G/P)$ are
  compatible by the commutativity of the upper left square in
  \cref{ssec:alg}\cref{eq:fda}: $\big(Y_P\bullet (\cdot) \big) \circ \Theta =
  \Theta_P \circ \pi_*$.
\end{itemize}

Notice that it follows from the preceding constructions that the $R$-algebra
$\BBh_T(\pt)$ and the $\BBh_T(\pt)$-algebra structure of $\BBh_T(G/P)$, depend
only on the ring $R$, the formal group law $F$, and the root system $\Phi$,
but not on the oriented cohomology theory $\BBh$.

\subsection{The formal affine Demazure algebra $\bfD_L$ and
  $\mathbf{\BBh_T(P/B)}$} \label{ssec:DL}

Given $R$, note that $S$ depends only on $T$, whereas $Q$, $\bfD$, and
$\bfD^*$ depend on the based root system determined by the inclusions
$T\subseteq B \subseteq G$.

Let $\bfD_L$ be the formal affine Demazure algebra of the based root system
determined by $T\subseteq B\cap L \subseteq L$. We identify $\bfD_L$ with the
$S$-subalgebra of $\bfD$ that has $S$-bases $\{\, Y_v\mid v\in W_L\,\}$ and
$\{\, X_v\mid v\in W_L\,\}$. When $Y_v$, respectively $X_v$, is considered as
an element of $\bfD_L$ it is denoted by $Y_{v,L}$, respectively $X_{v,L}$.

Let $i_a\colon \bfD_L\to \bfD$ be the inclusion. Then $i_a(Y_{v,L})= Y_v$ and
$i_a(X_{v,L})= X_v$. Hence, taking $S$-duals, $\bfD_L^*$ may be identified with
the $S$ submodule of $\bfD^*$ with bases $\{\, Y_v^*\mid v\in W_L\,\}$ and
$\{\, X_v^*\mid v\in W_L\,\}$, and the dual map
$i_a^*\colon \bfD^* \to \bfD_L^*$ is the projection given by
\[
  i_a^*(Y_z^*)=\begin{cases} Y_{v, L}^* &\text{if $z=v\in W_L$,} \\
    0 &\text{if $z\notin W_L$,} \end{cases} \quad\text{and}\quad
  i_a^*(X_z^*)=\begin{cases} X_{v, L}^* &\text{if $z=v\in W_L$,} \\
    0 &\text{if $z\notin W_L$.} \end{cases}
\]

It is not difficult to check that there is a commutative diagram
\begin{equation}
  \label{eq:dl}
  \vcenter{\vbox{  \xymatrix{\BBh_T(G/B) \ar[d]_\Theta^{\cong}
        \ar[r]^-{i^*} & \BBh_T(P/B) \ar[d]_{\Theta_L}^{\cong} \\ 
        \bfD^* \ar[r]^-{i_a^*} & \bfD_L^* ,} }}
\end{equation}
where $\Theta_L \colon \BBh_T(P/B)\to \bfD_L^*$ is the analog of $\Theta$ for
the flag variety $P/B \cong L/(B\cap L)$.

\subsection{The algebraic Leray-Hirsch homomorphism} \label{ssec:alhh}

Define
\[
  j_a\colon \bfD_L^*\to \bfD^* \quad\text{by} \quad j_a( Y_{v, L}^*)= Y_v^*
  \quad\text{for $v\in W_L$},
\]
so $j_a$ is a right inverse of the dual map $i_a^*$. Abusing notation
slightly, also define
\[
  j_a\colon \BBh_T(P/B) \to \BBh_T(G/B) \quad\text{to be the composition
    $\Theta\inverse \circ j_a\circ \Theta_L$.}
\]
Then $j_a$ is a right inverse of the surjection $i^*$.

The \emph{algebraic Leray-Hirsch homomorphism} is the composition
$\varphi_a =\operatorname{mult} \circ (\pi^*\otimes j_a)$. The constructions
in \cref{ssec:alg} and \cref{ssec:DL} piece together to give a commutative
diagram
\begin{equation*}
  \label{eq:alhi}
  \vcenter{\vbox{    \xymatrix{ \BBh_T(G/P) \otimes_{S} \BBh_T(P/B)
        \ar[rr]^-{\pi^* \otimes j_a} \ar@/^30pt/[rrrr]^{\varphi_a}
        \ar[d]_{\Theta_P \otimes \Theta_L}^{\cong} && \BBh_T(G/B) \otimes_{S}
        \BBh_T(G/B) \ar[rr]^-{\operatorname{mult}} \ar[d]_{\Theta\otimes
          \Theta}^{\cong} && \BBh_T(G/B)  \ar[d]_{\Theta}^{\cong} \\
        (\bfD^*)^{W_L} \otimes_{S} \bfD_L^* \ar@{^(->}[rr]^{k\otimes j_a}&&
        \bfD^* \otimes_{S} \bfD^* \ar[rr]^-{\operatorname{mult}} && \bfD^* ,}
    }} 
\end{equation*}
where $k\colon (\bfD^*)^{W_L} \to D^*$ is the inclusion. In the $S$-basis
$\{\, X_w^* \otimes X_{v,L}^* \mid w\in W^L, \, v\in W_L\,\}$ of
$(\bfD^*)^{W_L}\otimes \bfD_L^*$, the composition of $\Theta \circ \varphi_a
\circ (\Theta_P\otimes \Theta_L)\inverse$ is given by
\begin{equation}
  \label{eq:18}
  \Theta \circ \varphi_a \circ (\Theta_P\otimes \Theta_L)\inverse (X_w^*
  \otimes X_{v,L}^* )= X_w^* j_a(X_{v,L}^*) = X_w^* X_v^*.  
\end{equation}
It follows from \cref{ssec:glhh}\cref{eq:19}, \cref{ssec:algdual1}\cref{eq:8},
and \cref{ssec:algdual1}\cref{eq:20} that
\[
  \Theta \circ \varphi_g \circ (\Theta_P\otimes \Theta_L)\inverse (Z_w^*
  \otimes Y_{v,L}^* )= Z_w^* j_a(Y_{v,L}^*) = Z_w^* Y_v^* .
\]
Thus, $\varphi_a \ne \varphi_g$. As observed above, no geometric
interpretation of the classes $\Theta\inverse(X_z^*)$ is known.

We can now state the main theorem.

\begin{theorem}
  \label{thm:main}
  Suppose that $T\subseteq B\subseteq P\subseteq G$ are complex algebraic
  groups as in \cref{ssec:gp} and that $\BBh_\star(\cdot)$ is an equivariant
  oriented cohomology theory that is Chern complete over the point for
  $T$. Then the Leray-Hirsch homomorphisms $\varphi_g$ and $\varphi_a$ are
  $S$-module isomorphisms.
\end{theorem}

\section{Proof of \cref{thm:main}} \label{sec:pf}

In this section we continue to assume that $\BBh_\star(\cdot)$ is Chern
complete over the point for $T$ and prove \cref{thm:main}, namely that
$\varphi_g$ is an isomorphism and that $\varphi_a$ is an isomorphism. Both
proofs follow the same basic argument outlined in the next subsection, but the
details are more delicate for the geometric Leray-Hirsch homomorphism. We give
the full proof in this case and leave much of the simpler case of the
algebraic Leray-Hirsch homomorphism to the reader.

\subsection{Outline of the proof}\label{ssec:ol}

Considering the geometric Leray-Hirsch homomorphism, define $\psi_g=
\Theta\circ \varphi_g \circ (\Theta_P \otimes \Theta_L)\inverse$, so the
diagram
\[
  \xymatrix{ \BBh_T(G/P) \otimes_{S} \BBh_T(P/B) \ar[rr]^-{\varphi_g}
    \ar[d]_{\Theta_P \otimes \Theta_L}^{\cong} && \BBh_T(G/B)
    \ar[d]_{\Theta}^{\cong} \\ 
    (\bfD^*)^{W_L} \otimes_{S} \bfD_L^* \ar[rr]^-{\psi_g}&& \bfD^* }
\]
commutes. Therefore, to prove that $\varphi_g$ is an isomorphism, it is
sufficient to show that $\psi_g$ is an isomorphism. Because the domain and
codomain are free $S$-modules with the same rank, to show that $\psi_g$ is an
isomorphism it is enough to show that it is surjective. It follows from
\cref{ssec:glhh}\cref{eq:19}, \cref{ssec:algdual1}\cref{eq:20}, and
\cref{ssec:algdual1}\cref{eq:8} that
\[
  \psi_g(Z_w^* \otimes Y_{L,v}^*) = Z_w^* Y_v^*, 
\]
and so to show that $\psi_g$ is surjective, it is enough to show that $\{\,
Z_w^* Y_v^*\mid w\in W^L,\, v\in W_L\,\}$ spans $\bfD^*$.

Let $C$ denote the $S$-valued, $|W| \times |W|$-matrix with entries
$c_{w,v}^{w'',v''}$, where
\[
  Z_w^* Y_v^*= \sum_{w''\in W^L} \sum_{v''\in W_L} c_{w,v}^{w'',v''}
  Y_{w''v''}^*.
\]
To show that $\{\, Z_w^* Y_v^*\mid w\in W^L,\, v\in W_L\,\}$ spans $\bfD^*$,
it is enough to show that $C$ is invertible. In turn, the matrix $C$ is
invertible if its determinant is a unit in $S$. Using the identification of
$S$ with a ring of formal power series given by the isomorphism $\tau$ in
\cref{ssec:S+}, it suffices to show (1) that $C$ is upper triangular mod
$S_+$, in which case every entry in the usual expansion of $\det C$ as a sum
of products will lie in $S_+$, with the possible exception of the product of
the diagonal entries, and (2) that the diagonal entries all lie in
$1+S_+$. Therefore, $\det C \in 1+S_+$, and so is a unit in $S$.  After some
preliminary results, assertions (1) and (2) are proved in \cref{pro:u} and
\cref{pro:d}, respectively. An example of the matrix $C$ is given in
\cref{ssec:exa1}.

\subsection{Example -- type $A_2$} \label{ssec:exa1}

Suppose $G=\operatorname{SL}_3(\BBC)$, with simple reflections $s=s_\alpha$
and $t=s_\beta$, and that $P=P_s$. Since the rank of $W$ is equal to $2$, the
only element in $W$ that does not have a unique reduced expression is
$sts=tst$. We take $I_{sts} =(s,t,s)$. The linear order on $W$ described in
\cref{ssec:gp} is
\[
  e \prec s\prec t\prec ts \prec st \prec sts.
\]

Recall the elements $u_\gamma= x_\gamma x_{-\gamma}\inverse$ (in $1+S_+$)
defined in \cref{ssec:ualpha}. The coefficients in the expansion of
$Z_w^*Y_v^*$ in the basis $\{\, Y_z^*\mid z\in W\,\}$, and hence the entries
of the matrix $C$, are given in \cref{tab:1}. In the table, the entries marked
$*$ do not contribute to $\det C$.
\begin{table}[htb!]
  \caption{Entries of the matrix $C$}
  \centering \renewcommand\arraystretch{1.3}
  \begin{tabular}{>{$}c<{$}||>{$}c<{$} >{$}c<{$}|>{$}c<{$} >{$}c<{$}
    |>{$}c<{$} >{$}c<{$}}
    &Y^*_{e}&Y^*_{s}&Y^*_{t}&Y^*_{ts}&Y^*_{st} &Y^*_{w_0} \\
    \hline\hline 
    Z^*_{e} Y^*_e&1&0&*&*&*&* \\
    Z^*_{e} Y^*_{s}&0&1&*&*&*&*\\
    \hline 
    Z^*_{t} Y^*_{e} &0&0&-u_\beta&0&*&*\\ 
    Z^*_{t} Y^*_{s}&0&0&0&-u_\beta&*&*\\  \hline  
    Z^*_{st} Y^*_{e}&0&0&0&0&u_\alpha u_{\alpha+\beta}&0  \\ 
    Z^*_{st} Y^*_{s}&0
            &0&0&0&-x_\alpha u_{\alpha+\beta} &- u_{\alpha+\beta}
  \end{tabular}
  \label{tab:1}
\end{table}

By \cref{pro:d}, $c_{w,v}^{w,v}\in u+S^+$, where $u$ is a unit in $S$
determined by $w$ and $v$. For example, $c^{t,s}_{t,s}\in u_\beta
u_{\alpha+\beta} +S_+ $. This is consistent with the diagonal entries in the
table, but it is not obvious:
\[
  u_\beta u_{\alpha+\beta} \kappa_{\alpha+\beta}- u_\beta x_{\alpha+\beta} =
  -u_\beta, \quad \text{and so} \quad u_\beta u_{\alpha+\beta} +S_+ = -u_\beta
  +S_+.
\]

Also, in this example, each diagonal block of $C$ is lower triangular and is
in fact a diagonal matrix modulo $S_+$. Examples in type $A_3$ show that in
general the diagonal blocks of $C$ are neither lower triangular, nor diagonal,
modulo $S_+$. Rather, it is shown in \cref{pro:u} that the diagonal blocks of
$C$ are upper triangular modulo $S_+$.

\subsection{Base change matrices} \label{ssec:bc}

We begin with a lemma that quantifies the non-linearity, in $S$, of the
mapping that carries $Y_I$ to $Y_I^\times = Y_{I\rev} \bullet x_G f_e$, for a
sequence, $I$, of simple reflections.

It follows from the definitions that when $Y_z$ is expanded in terms of the
basis $\{\,\delta_z\mid z\in W\,\}$ of $Q_W$, the coefficient of $\delta_y$ is
equal to zero unless $y\leq z$, and that the coefficient of $\delta_z$ is a
unit in $Q$.  For $y,z\in W$ define $a_{z,y}\in Q$ and $b_{z,y}\in S$ by
\begin{equation}
  \label{eq:25}
  Y_z= \sum_{y\leq z} a_{z,y} \delta_y \quad \text{and}\quad \delta_z=
  \sum_{y\leq z} b_{z,y} Y_y ,  
\end{equation}
so the matrices $(a_{z,y})_{z,y\in W}$ and $(b_{z,y})_{z,y\in W}$ are
inverses. Let $\{\, f_z\mid z\in W\,\}$ be the basis of $Q_W^*$ that is dual
to the basis $\{\, \delta_z\mid z\in W\,\}$. By duality we have
\[
  Y_x^*= \sum_{z\in W} b_{z,x} f_z \quad \text{and} \quad f_x= \sum_{z\in W}
  a_{z,x} Y_z^*.
\]
Using the $\bullet$ action it is shown in \cite[Lem.~7.3]
{calmeszainoullinezhong:push} that
\begin{equation}
  \label{eq:14}
  Y_z^{\times}= \sum_{y\leq z} a_{z,y} y(x_G) f_y,\quad\text{and so}
  \quad f_z=  \sum_{y\leq z} z(x_G)\inverse b_{z,y} \cdot Y_y^{\times}.
\end{equation}

\begin{lemma}\label{lem:multqs}
  Suppose $q\in Q$ and $z\in W$. Then
  \[
    qY_{I_z\rev} \bullet x_G f_e = \sum_{\substack{w\\ w\leq z}} \Big(
    \sum_{\substack{y\\ w\leq y \leq z}} y(q) \cdot a_{z,y} \cdot b_{y,w}
    \Big) \,Y_w^{\times} .
  \]
\end{lemma}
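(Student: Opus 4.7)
The plan is to exploit associativity of the $\bullet$-action together with the two base-change identities recorded in \cref{ssec:bc}\cref{eq:14}, reducing the lemma to a double-sum manipulation in $Q_W^*$.

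First, viewing $q\in Q$ as the element $q\delta_e\in Q_W$ and using associativity of the $\bullet$-action — which is immediate from the definition $(hh')\bullet f = h\bullet(h'\bullet f)$ — I would rewrite the left-hand side as
\[
  qY_{I_z\rev}\bullet x_G f_e \;=\; q\bullet\bigl(Y_{I_z\rev}\bullet x_G f_e\bigr) \;=\; q\bullet Y_z^{\times}.
\]
Next, I would establish the auxiliary identity that scalar $\bullet$-multiplication diagonalizes the dual basis $\{f_y\mid y\in W\}$: for $q\in Q$ and $y\in W$, one has $q\bullet f_y = y(q)\, f_y$. This is verified by evaluating both sides on an arbitrary basis element $q'\delta_x\in Q_W$ and using the twisted multiplication $\delta_x\cdot q = x(q)\delta_x$. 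The same computation shows the compatibility $q\bullet(p f) = p(q\bullet f)$ for $p\in Q$, so that $Q$-scalars may freely commute past $q\bullet(\cdot)$.

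With these in hand, I would apply the first formula in \cref{ssec:bc}\cref{eq:14}, namely $Y_z^{\times} = \sum_{y\leq z} a_{z,y}\,y(x_G)\, f_y$, and combine it with the diagonalization above to obtain
\[
  q\bullet Y_z^{\times} \;=\; \sum_{y\leq z} a_{z,y}\,y(x_G)\,y(q)\, f_y .
\]
Then I would substitute the second formula in \cref{ssec:bc}\cref{eq:14}, namely $f_y = \sum_{w\leq y} y(x_G)\inverse b_{y,w}\, Y_w^{\times}$, to re-express the result in the basis $\{Y_w^{\times}\}$. The factors $y(x_G)$ and $y(x_G)\inverse$ cancel, and swapping the order of summation — collecting all contributions to a given $Y_w^{\times}$ by letting $y$ range over the interval $w\leq y\leq z$ — yields exactly the right-hand side of the lemma.

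The computation is essentially bookkeeping and there is no serious obstacle. The only delicate point is verifying that $\bullet$-multiplication by a scalar $q\in Q$ both acts diagonally on the basis $\{f_y\}$ and commutes appropriately with $Q$-linear scaling on $Q_W^*$; both rely on the twisted product in $Q_W$ rather than on an ordinary group action, so the Weyl-group translate $y(q)$ necessarily appears.
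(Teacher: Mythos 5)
Your proof is correct, and it takes a route that is parallel to but slightly different from the paper's. The paper plugs in the explicit expansion of $Y_{I_z\rev}$ in the $\delta$-basis of $Q_W$, cited from Cor.~7.2 of \cite{calmeszainoullinezhong:push}, together with the computation $(p\delta_y)\bullet(qf_z)= q\cdot zy\inverse(p)\, f_{zy\inverse}$, to arrive at the intermediate identity $qY_{I_z\rev}\bullet x_G f_e = \sum_{y\leq z} y(qx_G)\, a_{z,y}\, f_y$. You instead factor the computation through $q\bullet Y_z^{\times}$ by associativity of the $\bullet$-action, use the already-recorded expansion $Y_z^{\times} = \sum_{y\leq z} a_{z,y}\, y(x_G)\, f_y$ from \cref{ssec:bc}\cref{eq:14}, and establish the diagonalization $q\bullet f_y = y(q)\, f_y$ directly from the twisted product. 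These two paths produce the same intermediate expression, and from there the two proofs coincide: substitute $f_y = \sum_{w\leq y} y(x_G)\inverse b_{y,w}\, Y_w^{\times}$, cancel $y(x_G)$, and swap the order of summation. Your version has the modest advantage of reusing only \cref{eq:14} (which the paper already quotes and relies on elsewhere) plus a one-line verification, avoiding the extra citation of the $\delta$-basis formula for $Y_{I_z\rev}$; the paper's version stays closer to the raw combinatorics of $Q_W$.
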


\begin{proof}
  It is shown in \cite[Cor.~7.2] {calmeszainoullinezhong:push} that
  $Y_{I_z\rev} = \sum_{y, y\inverse \leq z} y(a_{z,y\inverse} x_G\inverse)
  \cdot x_G \cdot \delta_{y}$ and it is straightforward to check that
  $(p\delta_y)\bullet (qf_z)= q\cdot zy\inverse(p) \cdot f_{zy\inverse}$ for
  $p,q\in Q$ and $y,z\in W$. Thus,
  \[
    qY_{I_z\rev} \bullet x_G f_e = \sum_{\substack{y\\ y\inverse \leq z}}
    \big(q \cdot y(a_{z,y\inverse} x_G\inverse) \cdot x_G \cdot \delta_{y}
    \big)\bullet x_G f_e= \sum_{\substack{y\\ y \leq z}} \big(y(q x_G) \cdot
    a_{z,y} \big) f_{y} .
  \]
  Then using \cref{ssec:bc}\cref{eq:14} to replace $f_y$ by $\sum_{w\leq y}
  y(x_G)\inverse b_{y,w} Y_w^\times$ and simplifying gives the result.
\end{proof}

\subsection{}\label{ssec:eww}

The next few lemmas lead to a description of the expansion of $Z_w^*$ with
respect to the basis $\{\, Y_z^*\mid z\in W\,\}$ of $\bfD^*$. First we need
some definitions.

\begin{enumerate}
\item If $w,w'\in W^L$ and $v\in W_L$, then $Y_P\bullet Y_{wv}^{\times} \in
  (\bfD^*)^{W_L}$ and we define $e_{wv, w'}\in S$ by \label{eq:10}
  \begin{equation*}
    Y_P\bullet Y_{wv}^{\times}= \sum_{w'\in W^L} e_{wv, w'} \big(Y_P\bullet
    Y_{w'}^{\times} \big). 
  \end{equation*}

\item The action of $W$ on $Q$ extends linearly to an action of $Q_W$ on $Q$,
  which is denoted by ``$\cdot$'':
  \[
    \big(\sum_{z\in W} q_z\delta_z\big)\cdot q' = \sum_{z\in W} q_z\,z(q').
  \]
  
\item For a simple reflection $s=s_\alpha$ define
  \[
    \Delta_s\colon Q\to Q\quad\text{by}\quad \Delta_s(q)= \frac {q-s(q)}
    {x_\alpha} .
  \]
  It is shown in \cite[Corollary 3.4]{calmespetrovzainoulline:invariants} that
  $\Delta_s(S) \subseteq S$.
\end{enumerate}

\begin{lemma}\label{lem:yPyI}
  Suppose that $I= (s_1, \dots, s_p)$ with $s_1, \dots, s_p \in W_L$. Then
  \[
    Y_P Y_{I\rev}= Y_P\big(Y_I\cdot 1\big).
  \]
\end{lemma}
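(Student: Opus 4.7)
The plan is to interpret $Y_I\cdot 1$ as $Y_I\in\bfD$ acting on $1\in S$ via the natural operator action $(p\delta_z)\cdot q = p\cdot z(q)$ of $Q_W$ on $Q$. Then $Y_I\cdot 1\in Q$ is a scalar, and $Y_P(Y_I\cdot 1)$ is the product in $Q_W$ of $Y_P$ with this scalar; both sides of the asserted identity live in $\bfD\subseteq Q_W$.

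I would proceed by induction on $p=|I|$. The base case $p=0$ is immediate since both sides equal $Y_P$. For the inductive step, write $I=(s_1)\sqcup I'$ with $|I'|=p-1$, so that $Y_{I\rev}=Y_{I'\rev}Y_{s_1}$ and $Y_I\cdot 1 = Y_{s_1}(Y_{I'}\cdot 1)$. The inductive hypothesis gives $Y_P Y_{I'\rev}=Y_P\cdot q$ with $q:=Y_{I'}\cdot 1\in Q$, so $Y_P Y_{I\rev}=Y_P\, q\, Y_{s_1}$ and the lemma reduces to the key identity
\[
 Y_P\cdot q\cdot Y_s \;=\; Y_P\cdot Y_s(q)\qquad\text{for every $q\in Q$ and every simple reflection $s=s_\alpha\in W_L$.}
\]

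To prove the key identity, expand $Y_s = x_{-\alpha}\inverse\delta_e + x_\alpha\inverse\delta_s$ and split $Y_P q Y_s$ into two summands. The $\delta_e$-part equals $Y_P x_{-\alpha}\inverse q$ by commutativity of $Q$. For the $\delta_s$-part, use the multiplication rule in $Q_W$ together with the substitution $u=vs$ to rewrite
\[
  Y_P\, r\, \delta_s \;=\; \sum_{v\in W_L} v(x_L\inverse r)\,\delta_{vs} \;=\; \sum_{u\in W_L} u\bigl(s(x_L\inverse)\,s(r)\bigr)\,\delta_u.
\]
The crucial input is that because $s$ is a simple reflection of $W_L$, it sends $\alpha$ to $-\alpha$ and permutes $\Phi_L^+\setminus\{\alpha\}$, which yields $s(x_L)=u_\alpha x_L$ with $u_\alpha=x_\alpha/x_{-\alpha}$. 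This reduces the right-hand sum to $Y_P\cdot u_\alpha\inverse s(r)$. Applying the identity with $r=qx_\alpha\inverse$ and using $u_\alpha\inverse x_{-\alpha}\inverse = x_\alpha\inverse$ produces $Y_P q x_\alpha\inverse\delta_s = Y_P\, x_\alpha\inverse s(q)$, and adding the two summands gives $Y_P\,(x_{-\alpha}\inverse q + x_\alpha\inverse s(q)) = Y_P\cdot Y_s(q)$.

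The main obstacle is this key identity: it encodes a form of right $W_L$-invariance of $Y_P$, and it hinges on the specific behavior $s(x_L)=u_\alpha x_L$ of a simple reflection of $W_L$ on the product $x_L$. Without the hypothesis $s_i\in W_L$, the identity fails, so the lemma's hypothesis enters exactly at this one step; once it is in hand, the rest is routine induction.
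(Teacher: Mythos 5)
Your proof is correct and follows essentially the same inductive strategy as the paper's, reducing the lemma to the absorption identity $Y_P\,q\,Y_s = Y_P\,(Y_s\cdot q)$ for a simple reflection $s\in W_L$. The paper obtains that identity by combining the twisted-group-algebra commutation rule $qY_s = Y_s s(q)+\Delta_s(q)$ with $Y_P Y_s = Y_P\kappa_\alpha$, whereas you verify it directly by expanding $Y_P$ in the $\delta$-basis and substituting $v\mapsto vs$; both routes hinge on the same structural fact $s(x_L)=u_\alpha x_L$.
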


\begin{proof}
  The proof is by induction on $p$. Suppose $s=s_\alpha$ is a simple
  reflection in $W$. If $I=\emptyset$, then $Y_PY_I=Y_P=Y_P (Y_\emptyset \cdot
  1)$, and if $I=(s)$, then $Y_P Y_s= Y_P \kappa_\alpha= Y_P (Y_s\cdot 1)$,
  where $\kappa_\alpha=x_{-\alpha}\inverse +x_{\alpha}\inverse$ (see
  \cref{ssec:ualpha}).

  More generally, an easy computation shows that if $q\in Q$, then
  \[
    qY_s= Y_ss(q) + \Delta_s(q) \quad \text{and}\quad \kappa_\alpha s(q)
    +\Delta_s(q) = Y_s \cdot q .
  \]
  Now by induction,
  \begin{align*}
    Y_P Y_{I\rev}%
    &= Y_P \big(Y_{s_2}\dotsm Y_{s_p}\cdot 1 \big) Y_{s_1} \\
    &= Y_P \Big(Y_{s_1} s_1\big( Y_{s_2}\dotsm Y_{s_p}\cdot 1 \big)
      +\Delta_{s_1} \big(Y_{s_2}\dotsm Y_{s_p}\cdot 1 \big) \Big)\\
    &= Y_P \Big(\kappa_{\alpha_1} s_1\big( Y_{s_2}\dotsm Y_{s_p}\cdot 1 \big)
      +\Delta_{s_1} \big(Y_{s_2}\dotsm Y_{s_p}\cdot 1 \big) \Big)
      = Y_P \big(Y_I\cdot 1\big).
  \end{align*}
\end{proof}

\begin{lemma}\label{lem:ypywv}
  Suppose $w\in W^L$ and $v\in W_L$. Then
  \[
    e_{wv,w'}=0 \text{ unless $w\geq w'$} \quad\text{and}\quad e_{wv, w}=
    w(Y_v\cdot 1).
  \]
  Thus,
  \[
    Y_P\bullet Y_{wv}^{\times} = w(Y_v\cdot 1) \big(Y_P \bullet Y_w^\times
    \big)+ \sum_{\substack{w'\in W^L\\ w'< w}} e_{wv, w'} \big(Y_P\bullet
    Y_{w'}^{\times} \big).
  \]
\end{lemma}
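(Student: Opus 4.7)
The plan is to reduce the computation of $Y_P\bullet Y_{wv}^{\times}$ to an explicit expansion in an auxiliary ``coset basis'' of $Q_W^*$, and then read off both triangularity and the leading coefficient from this expansion.

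\textbf{Step 1 (reduction).} By the $L$-compatibility of the chosen reduced expressions, $I_{wv}=I_w\sqcup I_v$, so $Y_{I_{wv}\rev}=Y_{I_v\rev}Y_{I_w\rev}$ in $Q_W$, and consequently $Y_P\bullet Y_{wv}^{\times}=(Y_PY_{I_v\rev})\bullet Y_w^{\times}$. Every simple reflection appearing in $I_v$ lies in $W_L$, so \cref{lem:yPyI} applies and yields $Y_PY_{I_v\rev}=Y_P\cdot (Y_v\cdot 1)$ with $Y_v\cdot 1\in S$. Using the pointwise action $s\bullet f_y=y(s)f_y$ for $s\in S$, this gives
\[
Y_P\bullet Y_{wv}^{\times}=Y_P\bullet\big((Y_v\cdot 1)\bullet Y_w^{\times}\big),
\]
which together with $Y_w^{\times}=\sum_{y\leq w}a_{w,y}y(x_G)f_y$ (\cref{ssec:bc}\cref{eq:14}) expresses $Y_P\bullet Y_{wv}^{\times}$ as a $Q$-linear combination of the functionals $Y_P\bullet f_y$.

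\textbf{Step 2 (coset basis).} For $\tilde y\in W^L$ set $g_{\tilde y}=\sum_{v''\in W_L}f_{\tilde y v''}$; the $g_{\tilde y}$ are $W_L$-invariant and $Q$-linearly independent. Writing $Y_P=\sum_{v'\in W_L}v'(x_L\inverse)\delta_{v'}$ and using $(q\delta_{v'})\bullet f_y=yv'\inverse(q)f_{yv'\inverse}$, a short computation yields
\[
Y_P\bullet f_y=y(x_L\inverse)\,g_{\tilde y},
\]
where $\tilde y$ is the minimal-length representative of $yW_L$. Since $W_L$ permutes $\Phi^+\setminus\Phi_L^+$, the element $x_Gx_L\inverse=\prod_{\beta\in\Phi^+\setminus\Phi_L^+}x_{-\beta}$ is $W_L$-invariant, so applying $Y_P\bullet$ to the $f_y$-expansions of $Y_w^{\times}$ and of $(Y_v\cdot 1)\bullet Y_w^{\times}$ and grouping by cosets gives
\begin{align*}
Y_P\bullet Y_w^{\times}&=\sum_{\tilde y\in W^L}\tilde y(x_Gx_L\inverse)\Big(\sum_{v''\in W_L,\,\tilde yv''\leq w}a_{w,\tilde yv''}\Big)g_{\tilde y}, \\
Y_P\bullet Y_{wv}^{\times}&=\sum_{\tilde y\in W^L}\tilde y(x_Gx_L\inverse)\Big(\sum_{v''\in W_L,\,\tilde yv''\leq w}a_{w,\tilde yv''}\cdot\tilde y v''(Y_v\cdot 1)\Big)g_{\tilde y},
\end{align*}
with the analogous formula for $Y_P\bullet Y_{w'}^\times$ at each $w'\in W^L$.

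\textbf{Step 3 (triangularity and leading term).} Since $\tilde y\leq y$ whenever $\tilde y$ is the minimal coset representative of $yW_L$, these expansions are supported on $\{\tilde y\in W^L:\tilde y\leq w\}$ and $\{\tilde y\leq w'\}$ respectively. For $w'\in W^L$ and $e\neq v''\in W_L$ we have $w'v''>w'$ strictly, so only $v''=e$ contributes to the $g_{w'}$-coefficient of $Y_P\bullet Y_{w'}^{\times}$, which is therefore $w'(x_Gx_L\inverse)\cdot a_{w',w'}$, a unit in $Q$. Hence the transition matrix from $\{Y_P\bullet Y_{w'}^{\times}:w'\in W^L\}$ to $\{g_{\tilde y}:\tilde y\in W^L\}$ is upper triangular in the Bruhat order with invertible diagonal over $Q$; inverting it writes $Y_P\bullet Y_{wv}^{\times}$ as a $Q$-linear combination of the $Y_P\bullet Y_{w'}^{\times}$ with $w'\leq w$. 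Because $\{Y_P\bullet Y_{w'}^{\times}\}_{w'\in W^L}$ is an $S$-basis of $(\bfD^*)^{W_L}$, these coefficients in fact lie in $S$, giving $e_{wv,w'}=0$ unless $w'\leq w$. Matching $g_w$-coefficients on both sides of $Y_P\bullet Y_{wv}^{\times}=\sum_{w'}e_{wv,w'}(Y_P\bullet Y_{w'}^{\times})$ and cancelling the common factor $w(x_Gx_L\inverse)\cdot a_{w,w}$ yields $e_{wv,w}=w(Y_v\cdot 1)$.

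The main obstacle is combinatorial bookkeeping across the three incarnations of Weyl group elements (sequences of simple reflections, elements of $W$, minimal coset representatives): the key technical points are the $W_L$-invariance of $x_Gx_L\inverse$, the inequality $\tilde y\leq y$ for minimal coset representatives, and the strict inequality $wv''>w$ for $w\in W^L$ and $e\neq v''\in W_L$. Once these are in hand, the triangularity and the leading-term calculation fall out of the $g_{\tilde y}$-expansion.
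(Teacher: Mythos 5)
Your proof is correct, but it takes a genuinely different route from the paper's. The paper's proof leans entirely on \cref{lem:multqs}, which expresses $qY_{I_z\rev}\bullet x_Gf_e$ directly in the $Y^\times$-basis: taking $q=Y_v\cdot 1$ and $z=w$ and then applying $Y_P\bullet$ gives the required expansion in a single displayed line. You instead bypass \cref{lem:multqs} entirely, introducing the auxiliary ``coset basis'' $g_{\tilde y}=\sum_{v''\in W_L}f_{\tilde y v''}$ of $(Q_W^*)^{W_L}$, proving the explicit formula $Y_P\bullet f_y=y(x_L\inverse)g_{\tilde y}$, and then reading off triangularity and the leading coefficient from a transition-matrix argument between $\{g_{\tilde y}\}$ and $\{Y_P\bullet Y_{w'}^{\times}\}$. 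Your route costs you the need to invoke a few extra standard facts (that $W_L$ permutes $\Phi^+\setminus\Phi_L^+$ so $x_Gx_L\inverse$ is $W_L$-invariant, that $\tilde y\leq y$ for minimal coset representatives, and the length-additivity $\ell(w'v'')=\ell(w')+\ell(v'')$), but it pays off by making the passage to the $W^L$-indexed basis completely explicit: the paper's displayed computation lands in the span of $Y_P\bullet Y_z^\times$ for all $z\leq w$ in $W$, and the final reduction to $\{Y_P\bullet Y_{w'}^{\times}:w'\in W^L,\,w'\leq w\}$ is left implicit (it requires the observation that $z\leq w$ with $w\in W^L$ forces the $W^L$-component of $z$ to be $\leq w$, plus an induction), whereas your triangular transition matrix over $Q$ with invertible diagonal handles this cleanly in one stroke.
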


\begin{proof}
  The conclusions of the lemma follow from the definition of $Y_{wv}^\times$,
  \cref{lem:yPyI}, and \cref{lem:multqs}:
  \begin{align*}
    Y_P\bullet Y_{wv}^\times%
    & =Y_P\bullet (Y_{v}\cdot1) Y_{I_w\rev} \bullet x_G f_e \\
    &= w(Y_v\cdot 1) \big(Y_P \bullet Y_w^\times \big)+ \sum_{\substack{z\in
    W\\z<w}} \Big( \sum_{\substack{y\in W z\leq y\leq w}} y(Y_v\cdot 1)
    a_{w,y}b_{y,z} \Big) \big(Y_P \bullet Y_z^\times \big) .
  \end{align*}
\end{proof}

The next corollary follows formally from \cref{lem:ypywv}, the fact that
$Y_P\bullet(\cdot)$ is adjoint to the inclusion of $(\bfD^*)^{W_L}$ in
$\bfD^*$, and duality, because $\{\, Z_w^* \mid w\in W^L\,\}$ and $\{\,
Y_P\bullet Y_w^\times \mid w\in W^L\,\}$ are dual bases of $(\bfD^*)^{W_L}$.

\begin{corollary}\label{cor:zw}
  Suppose $w\in W^L$. Then
  \[
    Z_w^* = \sum_{v\in W_L} w(Y_v\cdot 1) Y_{wv}^* + \sum_{\substack{w'v'\in
        W\\ w'>w}} e_{w'v', w} Y_{w'v'}^*.
  \]
\end{corollary}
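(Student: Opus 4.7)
The plan is to exploit duality together with the adjointness statement in \cref{ssec:algdual2} to convert the expansion of $Z_w^*$ in the $Y_z^*$-basis of $\bfD^*$ into pairings that can be computed via \cref{lem:ypywv}. Since $\{\,Y_z^\times\mid z\in W\,\}$ and $\{\,Y_z^*\mid z\in W\,\}$ are dual bases of $\bfD^*$, I would first write, for each $w\in W^L$,
\[
  Z_w^* \;=\; \sum_{z\in W} \langle Z_w^*,\, Y_z^\times \rangle_{\bfD^*}\, Y_z^*.
\]
Parametrize $z=w'v'$ with $w'\in W^L$ and $v'\in W_L$, so the task reduces to computing $\langle Z_w^*,\, Y_{w'v'}^\times\rangle_{\bfD^*}$.

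Next I would apply the adjointness of $Y_P\bullet(\cdot)$ and the inclusion $(\bfD^*)^{W_L}\hookrightarrow \bfD^*$ from \cref{ssec:algdual2}. Because $Z_w^*\in (\bfD^*)^{W_L}$, this gives
\[
  \langle Z_w^*,\, Y_{w'v'}^\times \rangle_{\bfD^*} \;=\; \langle Z_w^*,\, Y_P\bullet Y_{w'v'}^\times \rangle_{(\bfD^*)^{W_L}}.
\]
Now I plug in the expansion provided by \cref{lem:ypywv}:
\[
  Y_P\bullet Y_{w'v'}^\times \;=\; w'(Y_{v'}\cdot 1)\bigl(Y_P\bullet Y_{w'}^\times\bigr) \;+\; \sum_{\substack{w''\in W^L\\ w''<w'}} e_{w'v',w''}\bigl(Y_P\bullet Y_{w''}^\times\bigr).
\]
Using that $\{\,Z_w^*\mid w\in W^L\,\}$ and $\{\,Y_P\bullet Y_w^\times\mid w\in W^L\,\}$ are dual $S$-bases of $(\bfD^*)^{W_L}$, each inner product on the right collapses to a Kronecker delta.

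The result is that $\langle Z_w^*, Y_{w'v'}^\times\rangle_{\bfD^*}$ equals $w(Y_{v'}\cdot 1)$ when $w'=w$, equals $e_{w'v',w}$ when $w'>w$, and vanishes otherwise (the case $w'<w$ is excluded because $w\not\in\{\,w''\mid w''<w'\,\}$, and the case of $w'$ incomparable to $w$ is handled identically). Reindexing by $v'\in W_L$ in the first group of terms and by $(w',v')$ with $w'>w$ in the second yields exactly the claimed identity. The only subtle point is the bookkeeping for which $(w',v')$ pairs contribute, but this is forced directly by the Bruhat condition $w''<w'$ appearing in \cref{lem:ypywv}; there is no further obstacle.
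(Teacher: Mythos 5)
Your proposal is correct and follows essentially the same route as the paper: expand $Z_w^*$ against the dual basis $\{\,Y_z^\times\,\}$, move $Y_P\bullet(\cdot)$ across the pairing by adjointness, apply \cref{lem:ypywv}, and collapse via the duality of $\{\,Z_w^*\,\}$ and $\{\,Y_P\bullet Y_w^\times\,\}$. The only tiny imprecision is that in the vanishing case you should note both that $w'\ne w$ (killing the leading term of \cref{lem:ypywv}) and that $w\notin\{\,w''\mid w''<w'\,\}$ (killing the lower sum), but this is cosmetic.
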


\subsection{Structure constants and the matrix $C$} \label{ssec:scme}

Define structure constants $p_{u,v}^w\in S$ by
\[
  Y_u^* Y_v^* = \sum_w p_{u,v}^w Y_w^*.
\]
An explicit formula for $p_{u,v}^w$, to be described below, is given in
\cite[Thm.~4.1] {goldinzhong:structure}. One consequence of the formula is
that if $p_{u,v}^w \ne 0$, then $u\leq w$ and $v\leq w$.

Using \cref{ssec:eww}\cref{eq:10} and duality we can write
\[
  Z_w^*Y_v^*= \Big(\sum_{w'\in W^L} \sum_{v'\in W_L} e_{w'v', w} \,Y_{w'v'}^*
  \Big) Y_v^* = \sum_{w''\in W^L} \sum_{v''\in W_L} \Big( \sum_{w'\in W^L}
  \sum_{v'\in W_L} e_{w'v', w}\, p_{w'v', v}^{w''v''} \Big) Y_{w''v''}^*.
\]
Define $c^{w'',v''}_{w,v}$ to be the coefficient of $Y_{w''v''}^*$ in $Z_w^*
Y_v^*$, so
\[
  c^{w'',v''}_{w,v} = \sum_{w'\in W^L} \sum_{v'\in W_L} e_{w'v', w}\, p_{w'v',
    v}^{w''v''}.
\]

Using the linear order on $W$ from \cref{ssec:gp}, define $C$ be the $|W|
\times |W|$ matrix whose $(wv, w''v'')$ entry is
$c^{w'',v''}_{w,v}$. Similarly, for $w, w''\in W^L$, let $C^{w,w''}$ be the
$|W_L| \times |W_L|$ matrix whose $(v, v'')$ entry is $c^{w'',v''}_{w,v}$. By
definition, $C$ is a block matrix with blocks $C^{w,w''}$ for $w, w''\in
W^L$. By \cref{lem:ypywv}, $e_{w'v', w}=0$ unless $w'\geq w$, and as noted
above, $p_{w'v',v}^{w''v''} =0$ unless $w''v''\geq w'v'$, so $e_{w'v', w}\,
p_{w'v', v}^{w''v''}=0$ unless $w''\geq w'\geq w$. Therefore,
$c^{w'',v''}_{w,v}=0$ unless $w''\geq w$, and so $C^{w,w''}=0$ unless $w''
\geq w$. Thus, $C$ is an upper triangular block matrix with diagonal
blocks $C^{w,w}$. It follows from \cref{cor:zw} that the $(v, v'')$-entry of
$C^{w,w}$ is
\begin{equation*}
  \label{eq:16}
  c^{w,v''}_{w,v} = \sum_{v'\in W_L} w(Y_{v'}\cdot 1)\, p_{wv', v}^{wv''} .  
\end{equation*}

\subsection{Explicit formulas for $p^{wv''}_{wv',v}$ and
  $c^{w,v''}_{w,v}$} \label{ssec:sc}

Suppose $w\in W^L$ and $v,v',v''\in W_L$.

Let $I_w=(s_1, \dots, s_p)$ and let $I_{v''}=(s_{p+1}, \dots, s_{p+q})$. For
$1\leq j\leq p+q$ set $s_j=s_{\alpha_j}$. By assumption, $I_{wv''}= I_w \amalg
I_{v''}$ is the concatenation of $I_w$ and $I_{v''}$.
\begin{itemize}
\item For a sequence, $I$, of simple reflections and $z\in W$, define
  $b_{I,z}$ to be the coefficient of $Y_z$ in the expansion of $Y_I$, so
  \[
    Y_I=\sum_{z\in W} b_{I,z}Y_z.
  \]
  
\item For subsequences $E$ and $F$ of $I_{wv''}$ and $1\leq j\leq p+q$, define
  $B_j^{E,F}\in \bfD$ by
  \begin{equation}
    \label{eq:11}
    B_{j}^{E,F}=  \begin{cases}
      x_{\alpha_j} \delta_{s_j}&\text{if $s_j\in E\sqcap F$,}\\
      -u_{\alpha_j}\delta_{s_j}&\text{if $s_j\in (E\setminus F) \sqcup
                                 (F\setminus E)$,}\\ 
      x_{-\alpha_j} \inverse + \big(x_{\alpha_j} x_{-\alpha_j}^{-2} \big)
      \delta_{s_j}&\text{if $s_j\notin E\sqcup F$.}
    \end{cases}
  \end{equation}

\end{itemize}
With this notation, Goldin and Zhong \cite[Thm.~4.1] {goldinzhong:structure}
prove that
\begin{equation}
  \label{eq:gz}
  p_{wv',v}^{wv''} = \sum_{E,F \sqsubseteq I_{wv''}} \big(B_{1}^{E,F} \dotsm
  B_{p+q}^{E,F} \cdot 1 \big) b_{E,wv'} b_{F,v}.
\end{equation}
Therefore,
\begin{equation}
  \label{eq:41}
  c^{w,v''}_{w,v} = \sum_{v'\in W_L} \sum_{E,F \sqsubseteq I_{wv''}}
  w\big( Y_{v'}\cdot 1 \big)\big( B^{E,F}_1 \dotsm B^{E,F}_{p+q} \cdot 1
  \big) b_{E,wv'} b_{F, v}.
\end{equation}

\subsection{}

Recall that $W$ is the group generated by the simple reflections subject to
the braid relations and the relations $s^2=e$ for each simple reflection
$s$. Let $\Wtilde$ be the semigroup generated by the simple reflections in $W$
subject to the braid relations and the relations $s^2=s$ for each simple
reflection $s$. As sets $W=\Wtilde$, but obviously the semigroup operation,
say $*$, in $\Wtilde$ is not equal the group operation in $W$. For a sequence
$I=(s_1, \dots, s_p)$ of simple reflections in $W$ define
\[
  \wtilde(I) =s_1*\dotsm *s_p\in \Wtilde
\]
and consider $\wtilde(I)$ as an element in $W$. For example, if $s_1\ne s_2$
are simple reflections, then $\wtilde(s_1, s_2, s_2)= s_1s_2$, and if $I$ is a
reduced expression of $z\in W$, then $\wtilde(I) =z$. In general, there is a
subsequence $(s_{i_1}, \dots, s_{i_k}) \sqsubseteq I$ such that
$\wtilde(I)=s_{i_1} \dotsm s_{i_k}$.

\begin{lemma}\label{lem:ulev}
  Suppose $y, z\in W$ and $I$ is a sequence of simple reflections.
  \begin{enumerate}
  \item If $b_{I,y} \ne 0$, then $y\leq \wtilde(I)$. \label{it:u1}
  \item If $I\sqsubseteq I_z$ and $b_{I,y} \ne 0$, then $y\leq
    z$. \label{it:u2}
  \item If $w\in W^L$, $u,v\in W_L$, $J\subseteq I_{wu}$, and $b_{J, wv}
    \ne0$, then $I_w\sqcap J = I_w$. \label{it:u3}
  \end{enumerate}
\end{lemma}

\begin{proof}
  The first assertion is proved in \cite[Lem.~3.2]{goldinzhong:structure}.

  The second assertion follows by first writing $Y_I$ as a $Q$-linear
  combination of $\{\, \delta_u\mid u\in W\,\}$, then writing each $\delta_u$
  as a linear combination of $\{\, Y_t\mid t\in W\,\}$, and then observing
  that
  \begin{itemize}
  \item if $\delta_u$ appears in the expansion of $Y_I$, then $u\leq z$, by
    the subword property of the Bruhat order, and
  \item if $Y_y$ appears in the expansion of $\delta_u$, then $y\leq u$, by
    \cref{ssec:bc}\cref{eq:25}.
  \end{itemize}

  With the assumptions of \cref{it:u3}, one has that $J= (I_{w} \sqcap J)
  \sqcup (I_{u} \sqcup J)$, and thus it follows from \cref{it:u1} and the
  definition of $\wtilde$ that $wv \leq \wtilde(J) \leq \wtilde( I_{w} \sqcap
  J) \wtilde (I_{u} \sqcap J)$ and $\wtilde (I_{u} \sqcap J) \in W_L$. Write
  $\wtilde( I_{w} \sqcap J)= w'v'$, where $w'\in W^L$ and $v'\in W_L$. Then
  $wv\leq w'v' \wtilde(I_u\sqcap J)$, so $w\leq w'$. But $w'$ is a subword of
  $w$, so $w'=w$. Now $I_w$ is reduced and $\wtilde( I_{w} \sqcap J)= wv'$,
  hence it must be that $v'=1$ and $I_w\sqcap J=I_w$, as claimed.
\end{proof}

\begin{proposition}\label{pro:u}
  For $w\in W^L$, the matrix $C^{w,w}$ is upper triangular modulo $S_+$.
\end{proposition}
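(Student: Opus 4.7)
The plan is to show that $c^{w,v''}_{w,v} \in S_+$ whenever $v \succ v''$, by analyzing the Goldin-Zhong formula \cref{eq:41}
\[
  c^{w,v''}_{w,v} = \sum_{v' \in W_L} \sum_{E,F \sqsubseteq I_{wv''}} w(Y_{v'}\cdot 1)(B^{E,F}_1 \cdots B^{E,F}_{p+q}\cdot 1) b_{E,wv'} b_{F,v}
\]
modulo $S_+$.

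First I would argue that the contribution of all pairs $(E,F)$ with $E \cap F \ne \emptyset$ lies in $S_+$. When a position $s_j$ lies in $E \cap F$, the corresponding factor $B^{E,F}_j = x_{\alpha_j}\delta_{s_j}$ introduces an explicit factor $x_{s_1 \cdots s_{j-1}(\alpha_j)} \in S_+$ once the multiplication in $Q_W$ is carried out and applied to $1$. The subtle point is that individual terms with $s_{j'}\notin E \cup F$ carry poles in $Q$; however, since $p^{wv''}_{wv',v}$ lies in $S$, these poles must cancel upon summation, and the $x_\mu$-factor persists so that the total contribution from such pairs lies in $S_+$. I expect this to be the main obstacle in the proof, and would handle it by grouping the $(E,F)$ sharing the same underlying ``position data'' off of $E \cap F$ and tracking how the overall $x_\mu$ factor behaves through the pole cancellation.

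Having reduced to pairs with $E \cap F = \emptyset$, I would decompose $E = E_w \sqcup E_{v''}$ and $F = F_w \sqcup F_{v''}$ along $I_{wv''} = I_w \sqcup I_{v''}$. By \cref{lem:ulev} applied to $b_{E,wv'}\ne 0$ we have $wv' \le \wtilde(E) \le wv''$. Using the standard parabolic fact that, for $w \in W^L$ and $z \in W$, $w \le z$ iff $w$ is at most the $W^L$-component of $z$, I would deduce that the $W^L$-component of $\wtilde(E)$ equals $w$; a Demazure-product analysis then forces $\wtilde(E_w) = w$, and since $E_w \sqsubseteq I_w$ with $|I_w| = \ell(w)$, this requires $E_w = I_w$. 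Disjointness then gives $F_w = \emptyset$, so $\wtilde(F) = \wtilde(F_{v''}) \le v''$, and \cref{lem:ulev} yields $v \le \wtilde(F) \le v''$ in Bruhat order. Since $v \succ v''$ is incompatible with $v \le v''$ (as $\preceq$ extends the Bruhat order), no pair $(E,F)$ with $E \cap F = \emptyset$ can contribute; combined with the first step this gives $c^{w,v''}_{w,v} \in S_+$, completing the proof.
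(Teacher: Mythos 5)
Your approach matches the paper's in its essentials: expand $c^{w,v''}_{w,v}$ via the Goldin--Zhong formula, force $I_w\sqsubseteq E$ from $b_{E,wv'}\ne0$ (your Demazure-product analysis is the same content as the paper's observation that $I_w\not\sqsubseteq E$ would force the $W^L$-component of any $y$ with $b_{E,y}\ne 0$ to lie strictly below $w$), apply \cref{lem:ulev} to $b_{F,v}\ne 0$ to see that $F\sqcap I_w=\emptyset$ would give $v\le v''$, and conclude that some position of $I_w$ lies in $E\sqcap F$, whose factor $x_{\alpha_j}\delta_{s_j}$ places the product in $S_+$. The one misdirection is the worry about pole cancellation: it is unnecessary, because each individual product $B^{E,F}_1\dotsm B^{E,F}_{p+q}\cdot 1$ already lies in $S$, so the assertion ``$\in S_+$'' can be made term by term. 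Indeed each factor $B^{E,F}_j$ maps $S$ into $S$; in the only problematic case $s_j\notin E\sqcup F$ one has $B^{E,F}_j\cdot b = (u_{\alpha_j}/x_{\alpha_j})\bigl(b+u_{\alpha_j}s_j(b)\bigr)$, and the divisibility $x_{\alpha_j}\mid b+u_{\alpha_j}s_j(b)$ holds because $u_{\alpha_j}\equiv -1$ and $s_j(b)\equiv b$ modulo $x_{\alpha_j}$. As you had framed it -- recovering $S_+$-membership only after summing away the poles -- this would be the genuinely delicate step, so it is worth knowing the difficulty is illusory.
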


\begin{proof}
  Suppose $w\in W^L$, $v,v''\in W_L$, and consider the formula for
  $c^{w,v''}_{w,v}$ in \cref{ssec:sc}\cref{eq:41}.
  
  We first show that if $v\not \leq v''$, then $c^{w,v''}_{w,v} \in
  S_+$. Indeed, if $v'\in W_L$, $E,F\sqsubseteq I_{wv''}$, and
  \[
    w\big( Y_{v'}\cdot 1 \big) \big( B^{E,F}_1 \dotsm B^{E,F}_{p+q} \cdot 1
    \big) b_{E,wv'} b_{F, v} \ne 0,
  \]
  then $b_{E,wv'} \ne 0$ and $b_{F, v}\ne 0$. It follows from
  \cref{lem:ulev}\cref{it:u3} that $I_w\sqsubseteq E$. By
  \cref{lem:ulev}\cref{it:u1}, $v\leq \wtilde (F)$. If in addition, $I_w
  \sqcap F=\emptyset$, then $F\sqsubseteq I_{v''}$ and so it follows from
  \cref{lem:ulev}\cref{it:u2} that $v\leq v''$. By assumption $v\not \leq
  v''$, so it must be that $I_w \sqcap F\ne \emptyset$, whence $E\sqcap F\neq
  \emptyset$.  But then it follows from \cref{ssec:sc}\cref{eq:11} that $\big(
  B^{E,F}_1 \dotsm B^{E,F}_{p+q} \cdot 1 \big) \in S_+$. Therefore, $\big(
  B^{E,F}_1 \dotsm B^{E,F}_{p+q} \cdot 1 \big) \in S_+$ for all $E$ and $F$,
  and so $c^{w,v''}_{w,v} \in S_+$.

  The contrapositive of the assertion in the preceding paragraph is that if
  $c^{w,v''}_{w,v} \notin S_+$, then $v\leq v''$ and it then follows that
  $v\preceq v''$. Therefore, if $c^{w,v''}_{w,v} \notin S_+$, then $v'' \not
  \prec v$ in the linear order on $W_L$. Equivalently, if $v''\prec v$, then
  $c^{w,v''}_{w,v} \in S_+$, and so every entry of $C^{w,w}$ below the
  diagonal lies in $S_+$.
\end{proof}

\begin{proposition}\label{pro:d}
  Suppose $w\in W^L$ and $v\in W_L$. Then $c^{w,v}_{w,v} \in 1+ S_+$.
\end{proposition}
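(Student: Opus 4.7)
The plan is to start from the explicit formula
\[
c^{w,v}_{w,v} = \sum_{v'\in W_L} \sum_{E,F \sqsubseteq I_{wv}} w(Y_{v'}\cdot 1)\bigl(B^{E,F}_1 \dotsm B^{E,F}_{p+q} \cdot 1\bigr) b_{E,wv'} b_{F, v}
\]
from \cref{ssec:sc}\cref{eq:41} and to show that, modulo $S_+$, a unique configuration contributes.

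First I would apply the same reductions used in \cref{pro:u}: modulo $S_+$ we may restrict to $I_w \sqsubseteq E$ (hence $E = I_w \sqcup E'$ with $E' \sqsubseteq I_v$) and $F \sqsubseteq I_v$. The crucial further observation is that $b_{F,v}\ne 0$ in fact forces $F = I_v$. Indeed, by \cref{lem:ulev}\cref{it:u1} we need $v \leq \wtilde(F)$, so
\[
\ell(v) \leq \ell(\wtilde(F)) \leq |F| \leq |I_v| = \ell(v),
\]
forcing all inequalities to be equalities and hence $F$ to be a reduced subsequence of $I_v$ of length $\ell(v)$, i.e.\ $F = I_v$. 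Then, to avoid a factor $B_j^{E,F} = x_{\alpha_j}\delta_{s_j} \in S_+ \cdot \bfD$ coming from some $s_j \in E \sqcap F$, we need $E' \sqcap F = E' = \emptyset$. Finally, $b_{I_w, wv'} = \delta_{v', e}$ forces $v' = e$.

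Thus only the configuration $v' = e$, $E = I_w$, $F = I_v$ survives modulo $S_+$. At this configuration $w(Y_e \cdot 1) = 1$ and both $b$-coefficients equal $1$, and every $B^{E,F}_j$ is of the form $-u_{\alpha_j}\delta_{s_j}$ (since $s_j \in E \setminus F$ for $j \leq p$ and $s_j \in F \setminus E$ for $j > p$). Using $(q\delta_z)(q'\delta_{z'}) = q\,z(q')\delta_{zz'}$ repeatedly and applying to $1$,
\[
\prod_{j=1}^{p+q} (-u_{\alpha_j} \delta_{s_j}) \cdot 1 = (-1)^{p+q} \prod_{k=1}^{p+q} u_{\gamma_k}, \qquad \gamma_k := s_1 \dotsm s_{k-1}(\alpha_k),
\]
the $p+q$ distinct positive roots in the inversion set of $wv$. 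Each $u_{\gamma_k} = -1 + \kappa_{\gamma_k} x_{\gamma_k}$ lies in $-1 + S_+$, so the product is $\equiv (-1)^{p+q} \pmod{S_+}$. Combining,
\[
c^{w,v}_{w,v} \equiv (-1)^{p+q}(-1)^{p+q} = 1 \pmod{S_+},
\]
i.e.\ $c^{w,v}_{w,v} \in 1 + S_+$.

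The main technical difficulty lies in the first reduction: showing that a configuration with some $B_j^{E,F} \in S_+ \cdot \bfD$ produces a contribution in $S_+$ even when other factors $B_k^{E,F}$ of type $s_k \notin E \cup F$ lie in $Q_W \setminus \bfD$. This is exactly the same issue addressed implicitly in \cref{pro:u} and is handled by carefully tracking the cancellations as the $\delta$'s are moved past the $Q$-valued coefficients; once this is granted, the rest of the argument is a clean bookkeeping exercise.
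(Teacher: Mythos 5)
Your proof is correct and takes essentially the same route as the paper's: isolate the unique configuration $(v',E,F)=(e,I_w,I_v)$ that survives modulo $S_+$ and evaluate its contribution as $(-1)^{\ell(wv)}u_{\gamma_1}\cdots u_{\gamma_{p+q}}\in 1+S_+$. The paper organizes the reductions in a slightly different order (constraining $F$ before $E$ rather than both at once), and the technical point you flag at the end---that a single factor $x_{\alpha_j}\delta_{s_j}$ pushes the whole Goldin--Zhong product into $S_+$---is asserted at exactly the same level of detail in the paper's own proof.
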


\begin{proof}
  By \cref{ssec:sc}\cref{eq:41},
  \[
    c^{w,v}_{w,v} = \sum_{v'\in W_L} \sum_{E,F \sqsubseteq I_{wv}} w\big(
    Y_{v'}\cdot 1 \big)\big( B^{E,F}_1 \dotsm B^{E,F}_{p+q} \cdot 1 \big)
    b_{E,wv'} b_{F, v}.
  \]
  Define
  \[
    \gamma_1= \alpha_1, \quad \gamma_2= s_1(\alpha_2), \quad \dots\quad
    \gamma_{p+q}= s_1\dotsm s_{p+q-1} (\alpha_{p+q}).
  \]
  We show that $w\big( Y_{v'}\cdot 1 \big) \big( B^{E,F}_1 \dotsm
  B^{E,F}_{p+q} \cdot 1 \big) b_{E,wv'} b_{F, v} \in S_+$, unless $v'=e$,
  $E=I_w$, and $F=I_v$, and that $w\big( Y_{e}\cdot 1 \big) \big(
  B^{I_w,I_v}_1 \dotsm B^{I_w,I_v}_{p+q} \cdot 1 \big) b_{I_w,w} b_{I_v, v} =
  (-1)^{\ell(wv)} u_{\gamma_1} \dotsm u_{\gamma_{p+q}}$. Consequently,
  \[
    c^{w,v}_{w,v}\in (-1)^{\ell(wv)} u_{\gamma_1} \dotsm u_{\gamma_{p+q}} +S_+
    \subseteq  1+ S_+ ,
  \]
  because each $u_{\gamma_j}\in -1+S_+$.

  To simplify the notation, set
  \[
    S(v',E,F)= w\big( Y_{v'}\cdot 1 \big) \big( B^{E,F}_1 \dotsm B^{E,F}_{p+q}
    \cdot 1 \big) b_{E,wv'} b_{F, v}
  \]
  for $v'\in W_L$ and $E, F \sqsubseteq I_{wv}$.

  Suppose that $S(v',E,F) \ne 0$. By \cref{lem:ulev}\cref{it:u3} we have
  $I_w\sqsubseteq E$.
  \begin{enumerate}
  \item If $F\sqcap I_w \ne \emptyset$, then it follows from
    \cref{ssec:sc}\cref{eq:11} that $B^{E,F}_1 \dotsm B^{E,F}_{p+q} \cdot 1
    \in S_+$, and so $S(v', F, E)\in S_+$. \label{it:1}
  \item If $F\sqcap I_w=\emptyset$, then $F\sqsubseteq I_v$, and so $|F| \leq
    |I_v|$. On the other hand, $b_{F, v} \ne 0$, and so $v\leq \wtilde(F)$ by
    \cref{lem:ulev}\cref{it:u1}. But then $\ell(v)\leq \ell(\wtilde(F)) \leq
    |F|$, and so $|I_v|\leq |F|$. Thus, $F=I_v$. \label{it:2}
  \item \label{it:3} If $F=I_v$ and $E\sqcap I_v \ne \emptyset$, then it
    follows from \cref{ssec:sc}\cref{eq:11} that $B^{E,I_v}_1 \dotsm
    B^{E,I_v}_{p+q} \cdot 1 \in S_+$, and so $S(v',E, I_v)\in S_+$.
  \item Finally, if $F=I_v$ and $E\sqcap I_v =\emptyset$, then $E=I_w$, and
    since $b_{E, wv'}\ne0$, we must also have $v'=e$. \label{it:4}
  \end{enumerate}
  It follows from \cref{it:1}, \cref{it:2}, \cref{it:3}, and \cref{it:4} that
  $S(v',E,F)\in S_+$, unless $F=I_v$, $E=I_w$, and $v'=e$.

  Finally, it follows from \cref{ssec:sc}\cref{eq:11} that $B^{I_w,I_v}_1
  \dotsm B^{I_w,I_v}_{p+q} \cdot 1 = (-1)^{\ell(ww)} u_{\gamma_1} \dotsm
  u_{\gamma_{p+q}}$, and clearly $\big( Y_{e}\cdot 1 \big) b_{I_w,w} b_{I_v,
    v} =1$, so $S(e,I_w,I_v) = (-1)^{\ell(wv)} u_{\gamma_1} \dotsm
  u_{\gamma_{p+q}}$, as claimed.
\end{proof}

\subsection{The algebraic Leray-Hirsch isomorphism}

We use the same general strategy as the preceding proof, but with some
modifications and simplifications.

First, by the argument in \cref{ssec:ol}, but using
\cref{ssec:alhh}\cref{eq:18} in place of \cref{ssec:glhh}\cref{eq:19}, one
sees that it is sufficient to show that the set $\{\, X_w^* X_v^* \mid w\in
W^L, v\in W_L\,\}$ is an $S$-basis of $\bfD^*$.

Next, define structure constants $p_{u,v}^w\in S$ by
\[
  X_u^* X_v^* = \sum_{w\in W} p_{u,v}^w X_w^*
\]
and define $C$ to be the $|W| \times |W|$ matrix whose $(wv, w''v'')$ entry is
$p^{w''v''}_{w,v}$.

For $w, w''\in W^L$, $C^{w,w''}$ is the $|W_L| \times |W_L|$ matrix with $(v,
v'')$ entry equal to $p^{w''v''}_{w,v}$. It is shown in \cite[Thm.~4.1]
{goldinzhong:structure} that if $p^{w''v''}_{w,v}\ne 0$, then $w\leq
w''$. Thus, $C$ is an upper triangular block matrix with diagonal blocks
$C^{w,w}$.

Finally, the structure constants $p^{wv''}_{w,v}$ can be computed using the
explicit formula in \cite[Thm.~4.1] {goldinzhong:structure}. Arguments similar
to those in the proofs of \cref{pro:u} and \cref{pro:d} then show that each
$C^{w,w}$ is upper triangular modulo $S_+$ and that the diagonal entries,
$p^{wv}_{w,v}$ of $C^{w,w}$ lie in $1+S_+$. Further details are left to the
reader.

\subsection{Example -- type $A_2$} \label{ssec:exa2}

Continuing the example in \cref{ssec:exa1}, the entries of the matrix $C$ for
the algebraic Leray-Hirsch homomorphism $\varphi_a$ are given in \cref{tab:2}.

\begin{table}[htb!]
  \caption{Entries of the matrix $C$ (for $\varphi_a$)}
  \centering \renewcommand\arraystretch{1.3}
  \begin{tabular}{>{$}c<{$}||>{$}c<{$} >{$}c<{$}|>{$}c<{$} >{$}c<{$}
    |>{$}c<{$} >{$}c<{$}}
    &X^*_{e}&X^*_{s}&X^*_{t}&X^*_{ts}&X^*_{st} &X^*_{w_0} \\
    \hline\hline 
    X^*_{e} X^*_e&1&0&0&0&0&0 \\
    X^*_{e} X^*_{s}&0&1&0&0&0&0\\
    \hline 
    X^*_{t} X^*_{e} &0&0&1&0&0&0\\ 
    X^*_{t} X^*_{s}&0&0&0&1&1&-\kappa_\alpha\\  \hline  
    X^*_{st} X^*_{e}&0&0&0&0&1&0  \\ 
    X^*_{st} X^*_{s}&0&0&0&0&x_\alpha&-u_\alpha
  \end{tabular}
  \label{tab:2}
\end{table}

Again, each diagonal block is lower triangular and is a diagonal matrix modulo
$S_+$, but higher rank examples show that in general the diagonal blocks of
$C$ are neither lower triangular, nor diagonal, modulo $S_+$.

\section{Applications}\label{sec:app}

\subsection{Descent for Leray-Hirsch isomorphisms}

In addition to $\varphi_g$ and $\varphihat_g$, using \cite[Lem.~11.1]
{calmeszainoullinezhong:equivariant} and its proof for the algebraic subgroup
$\langle e\rangle$ of $T$, and the construction in \cref{ssec:glhh}, one can
define a geometric Leray-Hirsch homomorphism
\[
  \varphi_g^{\langle e\rangle}\colon \BBh_{\langle e\rangle}(G/P) \otimes_{R}
  \BBh_{\langle e\rangle}(P/B) \longrightarrow \BBh_{\langle e\rangle}(G/B).
\]
By \cref{thm:main}, $\varphihat_g$ is an $\Shat$-module isomorphism.

\begin{theorem}\label{thm:noet}
  If $R$ is Noetherian, then the geometric Leray-Hirsch homomorphisms
  $\varphi_g^{\langle e\rangle}$ and $\varphi_g$ are $R$-module and $S$-module
  isomorphisms, respectively. In particular, the geometric Leray-Hirsch
  homomorphisms are isomorphisms for Chow rings, equivariant Chow rings,
  $K$-theory, and equivariant $K$-theory.
\end{theorem}

\begin{proof}
  It follows from \cref{thm:bssp} that $\varphi_g$ is obtained from
  $\varphi_g^{\langle e\rangle}$ by applying the base change functor
  $S\otimes_R-$, and that $\varphihat_g$ is obtained from $\varphi_g^{\langle
    e\rangle}$ by applying the composition of base change functors
  $(\Shat\otimes_S-) \circ (S\otimes_R-) =(\Shat\otimes_R-)$.

  Recall from \cref{ssec:S+} that $\Shat\cong R[[x_{\omega_1}, \dots
  ,x_{\omega_n}]]$ is a formal power series ring. Since $R$ is Noetherian,
  $\Shat$ is a faithfully flat $R$-algebra. Thus $\varphihat_g$ is an
  $\Shat$-module isomorphism, so $\varphi_g^{\langle e\rangle}$ is an
  $R$-module isomorphism by ``faithfully flat descent.''  It then follows that
  $\varphi_g$ is an $S$-module isomorphism by the functoriality of base
  change.
\end{proof}

\subsection{}
In the rest of this section $\varphi\colon \BBh_T(G/P) \otimes_{\BBh_T}
\BBh_T(P/B) \to \BBh_T(G/B)$ denotes an arbitrary Leray-Hirsch homomorphism,
not necessarily $\varphi_g$ or $\varphi_a$.

\subsection{Leray-Hirsch isomorphisms in (non-equivariant) oriented
  cohomology}\label{ssec:neq}

Let $H$ be a subgroup of $T$. Applying the base change functor $\BBh_H
\otimes_{\BBh_T} (\cdot)$ to $\varphi$, and composing with the natural
isomorphism
\begin{multline*}
  \BBh_H\otimes_{\BBh_T} \Big( \BBh_T(G/P) \otimes_{\BBh_T}
  \BBh_T(P/B)\Big) \\ \cong \Big(\BBh_H\otimes_{\BBh_T} \BBh_T(G/P)
  \Big) \otimes_{\BBh_H} \Big(\BBh_H\otimes_{\BBh_T}
  \BBh_T(P/B) \Big) ,
\end{multline*}
we obtain an $\BBh_H$-algebra homomorphism
\begin{equation*}
  \label{eq:1}
  \Big(\BBh_H\otimes_{\BBh_T} \BBh_T(G/P) \Big) \otimes_{\BBh_H}
  \Big(\BBh_H\otimes_{\BBh_T} \BBh_T(P/B) \Big) \rightarrow
  \BBh_H\otimes_{\BBh_T} \BBh_T(G/B).  
\end{equation*}
By \cite[Lem.~11.1] {calmeszainoullinezhong:equivariant} the natural
homomorphism
\[
  \BBh_H\otimes_{\BBh_T}\BBh_T(G/P) \to \BBh_H(G/P)
\]
induced by the restriction functor $\BBh_T(\cdot) \to \BBh_H(\cdot)$ is an
isomorphism. If $j$ is a section of $i^*$, then $j$ determines a section of
$i^*\colon \BBh_H \otimes_{\BBh_T} \BBh_T(P/B)\to \BBh_H \otimes_{\BBh_T}
\BBh_T(G/B)$. Therefore, the base change functor $\BBh_H \otimes_{\BBh_T}
(\cdot)$ applied to the Leray-Hirsch homomorphism $\varphi$ (in
$T$-equivariant cohomology) induces a Leray-Hirsch homomorphism
\[
  \varphi_H\colon \BBh_H(G/P) \otimes_{\BBh_H} \BBh_H(P/B) \xrightarrow{}
  \BBh_H(G/B)
\]
in $H$-equivariant cohomology, and if $\varphi$ is an isomorphism, then so is
$\varphi_H$. Taking $H=\langle e\rangle$ gives following proposition.

\begin{proposition}
  With the hypotheses in \cref{thm:main}, the Leray-Hirsch isomorphisms
  $\varphi_g$ and $\varphi_a$ induce geometric and algebraic Leray-Hirsch
  isomorphisms in the oriented cohomology theory $\BBh(\cdot)$ associated to
  $\BBh_\star(\cdot)$.
\end{proposition}

\subsection{}

Recall from \cref{ssec:fglex} that the universal oriented cohomology theory,
denoted by $\Omega^*(\cdot)$, is algebraic cobordism. Extending scalars from
$\BBZ$ to $\BBQ$ one obtains the oriented cohomology theory
$\Omega^*(\cdot)_{\BBQ}$.

Heller and Malag\'on-L\'opez \cite[Pro.~7]{hellermalagonlopez:equivariant}
prove a Leray-Hirsch theorem in $\Omega^*(\cdot)_{\BBQ}$ for a projective
locally isotrivial fibration of smooth varieties, $p\colon X\to Y$, when the
fibre, $F$, is smooth, projective, and paved by affine spaces ($F$ is called a
``cellular variety'' in \cite {hellermalagonlopez:equivariant}). As with the
Leray-Hirsch Theorem in singular cohomology, \cite[Pro.~7]
{hellermalagonlopez:equivariant} prescribe certain sections of $i^*\colon
\Omega^*(X)_{\BBQ} \to \Omega^*(F)_{\BBQ}$ such that the resulting
Leray-Hirsch homomorphism is an isomorphism. It is also shown that if $p$ is
Zariski-locally trivial, then the Leray-Hirsch homomorphism in
$\Omega^*(\cdot)$ is an isomorphism.

Now suppose $\Omega_G^*(\cdot)$ is an equivariant oriented cohomology theory
such that associated non-equivariant theory is $\Omega^*(\cdot)$. Then
\cite[Pro.~7]{hellermalagonlopez:equivariant} applies to the fibration
$\pi\colon G/B\to G/P$ and gives a family of sections of $i^*$ such that the
resulting Leray-Hirsch homomorphisms are isomorphisms. Since algebraic
cobordism is Chern complete over the point for $T$, \cref{thm:main} provides
two explicit sections of $i^*$ that satisfy the hypotheses in
\cite[Pro.~7]{hellermalagonlopez:equivariant}.

\subsection{Modules and characters}

The projection $\pi\colon G\to G/P$ induces an $S$-algebra homomorphism
$\pi^*\colon \BBh_T(G/P)\to \BBh_T(G/B)$, which may be identified with the
inclusion of $S$-algebras $(\bfD^*)^{W_L} \subseteq \bfD^*$. Thus, the
following corollary is an immediate consequence of \cref{thm:main}.

\begin{corollary}
  The ring $\bfD^*$ is a free $(\bfD^*)^{W_L}$-module with basis $\{\,
  X_v^*\mid v\in W_L \,\}$ and the ring $\BBh_T(G/B)$ is a free
  $\BBh_T(G/P)$-module with basis $\{\, \xi_L^v \mid v\in W_L\, \}$.
\end{corollary}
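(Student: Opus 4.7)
The plan is to extract both statements directly from Theorem \ref{thm:main} by transferring the ``obvious'' free basis on the tensor product side through the Leray-Hirsch isomorphism. No new computation should be required beyond checking that the maps respect the appropriate module structures.

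For the algebraic statement, I would first observe that $(\bfD^*)^{W_L}$ is a subring of $\bfD^*$, so $\bfD^*$ is naturally a $(\bfD^*)^{W_L}$-module by multiplication, and this is exactly the module structure induced by $\pi^*$ under the identification $\Theta_P$. Next, since $\{X_{v,L}^*\mid v\in W_L\}$ is an $S$-basis of $\bfD_L^*$ (as it is dual to the $S$-basis $\{X_{v,L}\mid v\in W_L\}$ of $\bfD_L$), the tensor product $(\bfD^*)^{W_L} \otimes_S \bfD_L^*$ is a free $(\bfD^*)^{W_L}$-module with basis $\{1\otimes X_{v,L}^*\mid v\in W_L\}$. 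By \cref{ssec:alhh}\cref{eq:18} (composed appropriately with $\Theta$'s), the algebraic Leray-Hirsch homomorphism $\varphi_a$ sends $1\otimes X_{v,L}^*$ to $j_a(X_{v,L}^*)=X_v^*$, and $\varphi_a$ is clearly $(\bfD^*)^{W_L}$-linear. Since $\varphi_a$ is an isomorphism by \cref{thm:main}, the images $\{X_v^*\mid v\in W_L\}$ form a free $(\bfD^*)^{W_L}$-basis of $\bfD^*$.

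For the geometric statement the argument is parallel. The set $\{\xi_L^v\mid v\in W_L\}$ is an $S$-basis of $\BBh_T(P/B)$ by \cref{ssec:fvL}, so $\BBh_T(G/P)\otimes_S \BBh_T(P/B)$ is a free $\BBh_T(G/P)$-module with basis $\{1\otimes \xi_L^v\}$. The geometric Leray-Hirsch homomorphism $\varphi_g$ sends $1\otimes \xi_L^v$ to $\pi^*(1)\cdot j_g(\xi_L^v)=j_g(\xi_L^v)=\xi^v$ (by \cref{ssec:glhh}\cref{eq:19}), and it is $\BBh_T(G/P)$-linear where the module structure on $\BBh_T(G/B)$ is via $\pi^*$. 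Applying \cref{thm:main} again, $\BBh_T(G/B)$ is a free $\BBh_T(G/P)$-module with basis $\{j_g(\xi_L^v)\mid v\in W_L\}$, which is what the statement asserts under the implicit identification of $\xi_L^v$ with $j_g(\xi_L^v)=\xi^v$. There is no real obstacle here; the only thing to verify is the $(\bfD^*)^{W_L}$-linearity (respectively $\BBh_T(G/P)$-linearity) of the Leray-Hirsch maps, which is immediate from the fact that multiplication on the right tensor factor commutes with the $S$-linear extension.
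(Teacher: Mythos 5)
Your proof is correct and follows essentially the same route as the paper, which simply observes that the $\BBh_T(G/P)$-module structure on $\BBh_T(G/B)$ is given by $\pi^*$ (equivalently, the inclusion $(\bfD^*)^{W_L}\hookrightarrow\bfD^*$) and then declares the corollary an immediate consequence of \cref{thm:main}; you merely spell out the routine linearity and base-transport details that the paper leaves implicit. Your reading of $\{\xi_L^v\}$ in the statement as shorthand for $\{j_g(\xi_L^v)\}=\{\xi^v\}\subseteq\BBh_T(G/B)$ is the intended one.
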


\subsection{}

Now recall the $\bullet$-action of $W$ on $\bfD^*$ from
\cref{ssec:alg}\cref{it:alg3} and \cref{ssec:alg}\cref{it:alg5}. By transport
of structure via the isomorphism $\Theta$, the $W$ action on $\bfD^*$
determines an action of $W$ on $\BBh_T(G/B)$ by $S$-algebra automorphisms.

If $y,z\in W$, then $y(f_z)= \delta_y \bullet f_z= f_{zy\inverse}$. It follows
that the $\bullet$-action of $W$ on $Q_W^*$ induces the regular
representation.

Because $W$ acts on $\bfD^*$ as $S$-algebra automorphisms, if $w\in W^L$ and
$v, v'\in W_L$, then $X_w^*\in (\bfD^*)^{W_L}$ and so $v'( X_w^* X_v^*) =
X_w^* \cdot v'(X_v^*)$. Thus, the following corollary, which generalizes
\cite[Thm.~4.7]{drellichtymoczko:module}, is an immediate consequence of
\cref{thm:main}.

\begin{corollary}
  Let $\chi_L$ be the character of the $\bullet$-action of $W_L$ on
  $\BBh_T(G/B)$, and let $\chi$ be the character of the $\bullet$-action of
  $W_L$ on $\BBh_T(P/B)$, then $\chi_L=|W^L|\, \chi$.
\end{corollary}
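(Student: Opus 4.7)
The plan is to show both $\chi_L$ and $\chi$ are concentrated at the identity: $\chi_L(e) = |W|$ while $\chi_L(v') = 0$ for $v' \ne e$, and analogously $\chi(e) = |W_L|$ with $\chi(v') = 0$ otherwise. The identity $\chi_L = |W^L|\,\chi$ then follows from $|W| = |W^L|\cdot|W_L|$, the rank equality packaged by \cref{thm:main}.

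First, use $\Theta$ and $\Theta_L$ from \cref{ssec:alg}\cref{eq:fda} to transport to the algebraic side, so that $\chi_L(v')$ is the $S$-trace of $\delta_{v'}\bullet(\cdot)$ on the free $S$-module $\bfD^*$ (of rank $|W|$) and $\chi(v')$ is the analogous trace on $\bfD_L^*$ (of rank $|W_L|$). Because the $\bullet$-action of $W_L$ is $S$-linear (in fact by $S$-algebra automorphisms), these traces are well defined as elements of $S$.

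Next, extend scalars from $S$ to $Q$. The $S$-basis $\{Y_z^*\}_{z\in W}$ of $\bfD^*$ sits inside the free $Q$-module $Q_W^*$ with the correct cardinality $|W|$, so it is automatically a $Q$-basis of $Q_W^*$; this yields an isomorphism $Q\otimes_S\bfD^* \xrightarrow{\,\cong\,} Q_W^*$ intertwining the $\bullet$-actions, and an analogous statement for $\bfD_L^*$ with the twisted group algebra of $W_L$ in place of $Q_W$. Because the trace of an $S$-linear endomorphism of a free $S$-module is preserved under base change to $Q$, we have
\[
  \chi_L(v') = \operatorname{tr}_Q\bigl(\delta_{v'}\bullet(\cdot) \,\big|\, Q_W^*\bigr),
\]
and analogously for $\chi(v')$ on the $W_L$ side.

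Finally, compute the trace on $Q_W^*$ using the $Q$-basis $\{f_z\}_{z\in W}$. As recalled in the paragraph preceding the corollary, $\delta_{v'}\bullet f_z = f_{z(v')\inverse}$, so the operator permutes this basis; its number of fixed points is $|\{z \in W : z(v')\inverse = z\}|$, which equals $|W|$ when $v' = e$ and $0$ otherwise. The same computation on the analog of $Q_W^*$ for $W_L$ yields $\chi(v') = |W_L|$ when $v' = e$ and $0$ otherwise. Combining gives $\chi_L(v') = |W^L|\,\chi(v')$ for every $v'\in W_L$. The only potential obstacle is purely bookkeeping: checking that trace commutes with the base change $S \to Q$ (immediate for free modules) and that $\{Y_z^*\}$ genuinely extends to a $Q$-basis of $Q_W^*$ (by the rank count $|W|$, together with the $S$-linear independence of $\{Y_z^*\}$ in $\bfD^*$).
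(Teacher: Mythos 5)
Your proof is correct, but it follows a genuinely different path from the paper's. The paper's proof is a one-line consequence of \cref{thm:main}: that theorem says $\{\,X_w^* X_v^* \mid w\in W^L,\ v\in W_L\,\}$ is an $S$-basis of $\bfD^*$, and since each $X_w^*$ lies in $(\bfD^*)^{W_L}$ and the $\bullet$-action of $W_L$ is by $S$-algebra automorphisms, $v'\bullet(X_w^*X_v^*) = X_w^*\,(v'\bullet X_v^*)$; thus $\bfD^*$ decomposes as a $W_L$-module into $|W^L|$ copies of $\bfD_L^*$, and $\chi_L = |W^L|\,\chi$ is immediate. Your argument instead computes each character on the nose: after base change to $Q$, the $\bullet$-action of $W_L$ permutes the $Q$-bases $\{f_z\}$ of $Q_W^*$ and of its $W_L$-analog freely (right multiplication by $v'^{-1}$ has no fixed points for $v'\ne e$), so $\chi_L$ and $\chi$ are supported at $e$ with values $|W|$ and $|W_L|$ respectively. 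This is valid and more explicit — it identifies the characters, not just their ratio — and in fact it bypasses \cref{thm:main} entirely: the only input is the identity $|W|=|W^L||W_L|$, which is Lagrange's theorem rather than anything "packaged by \cref{thm:main}" as your opening sentence suggests. The trade-off is conceptual: the paper's argument exhibits the Leray-Hirsch decomposition as an explicit $W_L$-module isomorphism $\BBh_T(G/B) \cong \bigoplus_{w\in W^L}\BBh_T(P/B)$, which is the structural content the corollary is meant to record (and what makes it a generalization of the Drellich--Tymoczko result); your computation proves the same numerical identity without producing that decomposition.
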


\subsection{Alternate formulations: The Borel model and the $\bullet$ and
  $\odot$ $W$-actions}

As a final application, we give an alternate formulation of the Leray-Hirsch
isomorphisms using the so-called Borel model of $\BBh_T(G/B)$ and the
$\bullet$- and $\odot$-actions of $W$ on $\BBh_T(G/B)$.

In equivariant $K$-theory there are two maps from $K_T(\pt)$, which may be
identified with the representation ring of $T$, to $K_T(G/B)$. One is induced
from the canonical map $a_{G/B}\colon G/B \to \pt$ and the other, called the
\emph{characteristic map}, and denoted by $c$, is given by the rule that maps
a character $\lambda$ of $T$ to the isomorphism class of the $T$-equivariant
line bundle on $G/B$ on which $T$ acts on the fibre over $B$ as
$\lambda$. These two maps induce an isomorphism $K_T(\pt) \otimes_{K_T(\pt)^W}
K_T(\pt) \xrightarrow{\,\cong\,} K_T(G/B)$ given by $p\otimes q\mapsto
a_{G/B}^*(p) \cdot c(q)$

From now on, in addition to the standing assumptions in \cref{ssec:asm} we
also assume that the torsion primes of $\Phi$ are invertible in $R$.

A generalization of the characteristic map,
\[
  \ch_g\colon \BBh_T \to \BBh_T(G/B),
\]
is defined in \cite[\S10] {calmeszainoullinezhong:equivariant} for any
equivariant oriented cohomology theory and also called the
\emph{characteristic map}. Set
\[
  \ch_a= \Theta\circ \ch_g, \quad\text{so}\quad \ch_a\colon S\to \bfD^*.
\]
It is shown in \cite[Lem.~10.1] {calmeszainoullinezhong:equivariant} that the
composition $\ch_a(p)= p\bullet 1$. One checks that $\ch_a(p)= \sum_{w\in W}
w(p) f_w$ and that $\ch_a$ is $W$-equivariant with respect to the natural
action of $W$ on $S$ and the $\bullet$-action of $W$ on $\bfD^*$. It is shown
in \cite[Thm.~11.4] {calmeszainoullinezhong:coproduct} that the map
\[
  \rho\colon S\otimes_{S^W}S \to \bfD^*\quad \text{with} \quad \rho(p\otimes
  q)= p \cdot\ch_a(q)
\]
is an $S$-algebra isomorphism, where $S$-module structure on $S\otimes_{S^W}S$
is given by the action of $S$ on the left-hand factor. This is the \emph{Borel
  model} of $\bfD^*$.

\subsection{}\label{ssec:eq2}
The group $W$ acts independently on each factor of the $S$-algebra
$S\otimes_{S^W}S$. It is shown in \cite[Lem~3.7]
{lenartzainoullinezhong:parabolic} that the isomorphism $\rho$ intertwines the
action of $W$ on the right-hand factor with the $\bullet$-action of $W$ on
$\bfD^*$, and the action of $W$ on the left-hand factor with an action of $W$
on $\bfD^*$, which is denoted by $\odot$. In other words, for $z\in W$, the
diagrams
\[
  \vcenter{\vbox{\xymatrix{S\otimes _{S^W}S\ar[r]^-\rho \ar[d]_-{\id\otimes z}
        & \bfD^*\ar[d]^-{z\bullet(\cdot)}\\
        S\otimes_{S^W}S\ar[r]^-\rho & \bfD^*}}} \quad\text{and}\quad
  \vcenter{\vbox{\xymatrix{S\otimes _{S^W}S\ar[r]^-\rho \ar[d]_-{z\otimes \id}
        & \bfD^*\ar[d]^-{z\odot (\cdot)}\\
        S\otimes_{S^W}S\ar[r]^-\rho & \bfD^*}}}
\]
commute. By transport of structure via the isomorphism $\Theta$, the
$\bullet$- and $\odot$-actions of $W$ on $\bfD^*$ define commuting actions of
$W$ on $\BBh_T(G/B)$, also denoted by $\bullet$ and $\odot$, respectively.  It
is shown in \cite[\S9] {calmeszainoullinezhong:equivariant} that the
$\bullet$-action of $W$ on $\BBh_T(G/B)$ coincides with the action arising
from the right action of $W$ on $G/T$. Roughly speaking, the $\odot$-action of
$W$ on $\BBh_T(G/B)$ arises from the action of $W$ on the Picard group of
$G/B$.

In order to distinguish the $W$-actions, let $(\bfD^*)^{(W_L, \bullet)}$ and
$(\bfD^*)^{(W_L, \odot)}$ denote the $W_L$-invariants with respect to the
$\bullet$-action and the $\odot$-action, respectively. Similar notation will
be used for $\BBh_T(G/B)$. For example,
\begin{equation}
  \label{eq:2}
  \BBh_T(G/P) \cong (\bfD^*)^{(W_L, \bullet)} \cong \BBh_T(G/B)^{(W_L,
    \bullet)} ,
\end{equation}
and a Leray-Hirsch homomorphism is a map
\[
  \varphi\colon \BBh_T(G/B)^{(W_L, \bullet)} \otimes_{\BBh_T} \BBh_T(P/B)\to
  \BBh_T(G/B).
\]
We can use the $\odot$ action to make the domain of $\varphi$ symmetric with
respect to taking $W_L$-invariants.

\subsection{}\label{ssec:eq4}
Replacing $W$ by $W_L$, there is a characteristic map $\ch_a^L\colon S\to
\bfD^*_L$ and an $S$-algebra isomorphism $\rho^L\colon S\otimes_{S^{W_L}}S \to
\bfD^*_L$.

Consider the chain of isomorphisms
\[
  S\cong S^{W_L}\otimes _{S^{W_L}}S\cong (S\otimes_{S^{W_L}}S)^{(W_L,
    \odot)}\xrightarrow[\cong]{\rho^L} (\bfD^*_L)^{(W_L, \odot)} .
\]
It is straightforward to check that the composition is given by the
characteristic map for $W_L$, namely $p\mapsto \rho^L(1\otimes p)=\ch_a^L(p)
\in \bfD^*_L$. Thus, there are isomorphisms
\begin{equation}
  \label{eq:4}
  \BBh_T(P/B)^{(W_L, \odot)} \cong (\bfD^*_L)^{(W_L, \odot)} \cong S .
\end{equation}

\begin{corollary}
  If $j\colon \BBh_T(P/B) \to \BBh_T(G/B)$ is a right inverse to $i^*\colon
  \BBh_T(G/B) \to \BBh_T(P/B)$ and $\varphi$ is the resulting Leray-Hirsch
  homomorphism, then $\varphi$ may be identified with the homomorphism
  \[
    \BBh_T(G/B)^{(W_L, \bullet)}\otimes_{\BBh_T^{W_L}} \BBh_T(P/B)^{(W_L,
      \odot)} \to \BBh_T(G/B)
  \]
  given by the composition of the multiplication map in $\BBh_T(G/B)$ with
  $\id\otimes j$.

  In particular, the Leray-Hirsch isomorphisms $\varphi_g$ and $\varphi_a$
  each induce an isomorphism
  \[
    \BBh_T(G/B)^{(W_L, \bullet)}\otimes_{\BBh_T^{W_L}} \BBh_T(P/B)^{(W_L,
      \odot)} \xrightarrow{\, \cong\,} \BBh_T(G/B).
  \]
\end{corollary}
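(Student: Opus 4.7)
The strategy is purely formal once one invokes the Borel model for the flag variety $P/B$ and the identifications \cref{eq:2} and \cref{eq:4}: I will rewrite the domain of $\varphi$ and then verify that the map formula $\mult\circ(\pi^*\otimes j)$ agrees, under the rewriting, with $\mult\circ(\id\otimes j)$. The final assertion is immediate from \cref{thm:main}.

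First, I would apply the Borel model to $L$ in place of $G$: the map $\rho^L\colon S\otimes_{S^{W_L}}S\xrightarrow{\cong}\BBh_T(P/B)$, $p\otimes q\mapsto p\cdot \ch_a^L(q)$, identifies the $\BBh_T(\pt)=S$-algebra structure of $\BBh_T(P/B)$ with multiplication on the left factor. This gives
\[
  \BBh_T(G/P)\otimes_S\BBh_T(P/B)\;\cong\;\BBh_T(G/P)\otimes_S\bigl(S\otimes_{S^{W_L}}S\bigr)\;\cong\;\BBh_T(G/P)\otimes_{S^{W_L}}S.
\]
Now \cref{eq:2} identifies $\BBh_T(G/P)$ with $\BBh_T(G/B)^{(W_L,\bullet)}$, and \cref{eq:4} identifies $S$ with $\BBh_T(P/B)^{(W_L,\odot)}$ via $p\mapsto \ch_a^L(p)$. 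Combining these yields the desired identification of the domain of $\varphi$ with $\BBh_T(G/B)^{(W_L,\bullet)}\otimes_{\BBh_T(\pt)^{W_L}}\BBh_T(P/B)^{(W_L,\odot)}$.

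Next I would track the Leray-Hirsch map through these identifications. Under \cref{eq:2} the homomorphism $\pi^*\colon \BBh_T(G/P)\to\BBh_T(G/B)$, which by the left-hand square in \cref{ssec:alg}\cref{eq:fda} corresponds to the inclusion $(\bfD^*)^{W_L}\hookrightarrow\bfD^*$, becomes the inclusion $\BBh_T(G/B)^{(W_L,\bullet)}\hookrightarrow\BBh_T(G/B)$. Since $j$ is $S$-linear (hence $S^{W_L}$-linear), a pure tensor $\alpha\otimes\ch_a^L(p)$ in $\BBh_T(G/B)^{(W_L,\bullet)}\otimes_{S^{W_L}}\BBh_T(P/B)^{(W_L,\odot)}$ corresponds to a pure tensor in $\BBh_T(G/P)\otimes_S\BBh_T(P/B)$ that $\varphi$ sends to $\alpha\cdot j(\ch_a^L(p))$. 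This is exactly $\bigl(\mult\circ(\id\otimes j)\bigr)(\alpha\otimes\ch_a^L(p))$, which proves the first assertion. The second assertion follows from \cref{thm:main} applied to each of $\varphi=\varphi_g$ and $\varphi=\varphi_a$.

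I do not expect any serious obstacle here: the entire argument is a bookkeeping exercise with tensor products. The only care needed is to keep track of which $S$-module structure is being used in each tensor factor, since $S$ acts on $\BBh_T(P/B)\cong S\otimes_{S^{W_L}}S$ through the left factor while the $\odot$-invariants sit in the right factor; the switch from an $S$-tensor product to an $S^{W_L}$-tensor product is what makes the identification come out symmetric in the $\bullet$- and $\odot$-invariants.
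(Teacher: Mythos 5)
Your argument is correct and takes essentially the same route as the paper: both rest on rewriting the domain $\BBh_T(G/P)\otimes_S \BBh_T(P/B)$ as $\BBh_T(G/B)^{(W_L,\bullet)}\otimes_{S^{W_L}}\BBh_T(P/B)^{(W_L,\odot)}$ using \cref{ssec:eq2}\cref{eq:2}, \cref{ssec:eq4}\cref{eq:4}, and the Borel-model isomorphism for $L$, and then checking on pure tensors that the two formulas for the map agree. The paper packages this as the claim that the map $\id\otimes k$ (with $k$ the inclusion of the $\odot$-invariants into $\bfD_L^*$) is an isomorphism fitting into a commuting triangle with $\varphi$ and $\mult\circ(\id\otimes j)$; your chain of tensor-product identifications via $\rho^L$ is just the explicit unpacking of that "straightforward to check" step. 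One small slip: the square in \cref{ssec:alg}\cref{eq:fda} that realizes $\pi^*$ as the inclusion $(\bfD^*)^{W_L}\hookrightarrow\bfD^*$ is the right-hand square, not the left-hand one (the left-hand square is the $\pi_*$/$Y_P\bullet(\cdot)$ square); this does not affect the argument.
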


\begin{proof}
  It is straightforward to check that if $k\colon (D^*_L)^{(W_L, \odot)} \to
  D^*_L$ denotes the inclusion, then
  \[
    \id\otimes k\colon (D^*)^{(W_L, \bullet)} \otimes_{S^{W_L}} (D^*_L)^{(W_L,
      \odot)} \to (D^*)^{(W_L, \bullet)} \otimes_S D_L^*
  \]
  is an isomorphism and the diagram
  \begin{equation}
    \label{eq:5}
    \vcenter{\vbox{ \xymatrix{(D^*)^{(W_L, \bullet)} \otimes_{S^{W_L}}
          (D^*_L)^{(W_L, \odot)}  \ar[rr]^-{\id\otimes k}_-{\cong}
          \ar[dr]_{\mult \circ (\id \otimes j)} 
          && (D^*)^{W_L} \otimes_{S} D^*_L \ar[dl]^{\varphi} \\
          &D^*&}}}
  \end{equation}
  commutes. The proof follows from \cref{eq:5}, using
  \cref{ssec:eq2}\cref{eq:2}, \cref{ssec:eq4}\cref{eq:4}, and the maps
  $\Theta$, $\Theta_L$, and $\Theta_P$.
\end{proof}

\appendix
\section{Equivariant $K$-theory}\label{sec:apdx}

In this appendix we give a direct construction of the geometric Leray-Hirsch
homomorphism in $T$-equivariant $K$-theory for the fibre bundle $G/B \to G/P$,
and sketch a proof that it is an isomorphism.

Most of the notation in this appendix is either as defined above, but tailored
for the special case of equivariant $K$-theory, or taken from
\cite[\S2]{grahamkumar:positivity}. Note that $K_T(\cdot)$ not Chern complete
over the point for $T$ because $K_T(\pt)$ is the representation ring of $T$.

\subsection{} \label{ssec:a1}

First, for $(H,X)\in \GrSmC$, $K_G(X)$ denotes the Grothendieck group of the
category of $H$-equivariant coherent sheaves of $\CO_X$-modules. By the
equivariant version of a theorem of Borel and Serre, every class in $K_G(X)$
contains an $H$-equivariant vector bundles on $X$. The constructions in
\cite[Ch.~5] {chrissginzburg:representation} show that $K_\star( \, \cdot\,)$
is an equivariant oriented cohomology theory. The associated oriented
cohomology theory, $K_{\langle e\rangle}(\cdot)$, is algebraic $K$-theory, and
the formal group law is the multiplicative formal group law $F(t,u)= t+u-tu$.

\subsection{} \label{ssec:a2}

In $K$-theory, $R=K(\pt)$ is canonically isomorphic to $\BBZ$, and
$S=K_T(\pt)$ is canonically isomorphic to representation ring of $T$. Because
$T$ is a torus, we may identify $S$ with the group algebra
$\BBZ[\Lambda]$. Each character, $\lambda\in \Lambda$, indexes an element
$e^\lambda\in S$, and the multiplication in $S$ is determined by $e^\lambda
e^\mu=e^{\lambda+\mu}$. The augmentation ideal, denoted by $S_+$, is the
kernel of the ring homomorphism $S\to \BBZ$ that maps each $e^\lambda$ to
$1$. Let $\widehat S$ denote the $S_+$-adic completion of $S$.

With the notation in \cref{ssec:S}, let $\BBZ[\Lambda]_F$ denote the image of
the subring of $\BBZ[[\Lambda]]_F$ generated by $\{\, x_\lambda\mid \lambda\in
\Lambda\,\}$. The isomorphism $\widehat S \xrightarrow{\, \sim\,}
\BBZ[[\Lambda]]_F$ maps $1-e^{-\lambda}$ to $x_\lambda$ and restricts to an
isomorphism $S \xrightarrow{\, \sim\,} \BBZ[\Lambda]_F$ (see \cite[Example
2.20]{calmespetrovzainoulline:invariants}). We identify these two rings using
this isomorphism. For example, we use the equalities
\[
  x_\lambda= 1-e^{-\lambda}, \quad x_{-\lambda}= x_\lambda / (x_\lambda-1) =
  1-e^\lambda, \quad\text{and}\quad u_\lambda= x_\lambda/ x_{-\lambda}=
  x_\lambda-1= -e^{-\lambda}
\]
in arguments below.

\subsection{} \label{ssec:a3}

Suppose $w\in W^L$. If $I=I_w$, then the Bott-Samelson map $q_{I}^P\colon
Z_{I}^P \to G/P$ in \cref{ssec:bs} factors through a rational resolution
$Z_{I}^P \to \overline{B\cdot wP}$. Therefore, applying the functor
$K_T(\cdot)$ to $q_{I}^P$, we see that
\[
  \xi^P_w= (q_{I}^P)_*( 1) = (q_{I}^P)_*\big( [\CO_{Z_I^P}] \big) =
  [\CO_{\overline{ B\cdot wP}}]
\]
is the class of the structure sheaf of $\overline{ B\cdot wP}$, and hence that
$\xi^P_w$ does not depend on the choice of $I_w$.

The constructions in \cref{sec:lhh} apply verbatim to define a geometric
Leray-Hirsch homomorphism,
\begin{equation}
  \label{eq:glhia}
  \vcenter{\vbox{    \xymatrix{ K_T(G/P) \otimes_{S} K_T(P/B)
        \ar[rr]^-{\pi^* \otimes j_g} \ar@/^30pt/[rrrr]^{\varphi}
        && K_T(G/B) \otimes_{S}  K_T(G/B) \ar[rr]^-{\operatorname{mult}}
        && K_T(G/B), } }}  
\end{equation}
where
\begin{equation*}
  \label{eq:19a}
  j_g( \xi_L^v)= \xi^v\quad\text{and} \quad \varphi(\xi_P^w \otimes \xi_L^v)=
  \pi^*(\xi_P^w) j_g( \xi_L^v) =\zeta^w \xi^v .
\end{equation*}
To show that $\varphi$ is an $S$-module isomorphism, we follow the same
overall strategy as in \cref{ssec:ol}, but with modifications made possible
because the Bott-Samelson classes do not depend on the choice of a
resolution. These modifications lead to more precise results.

\begin{itemize}
\item The expansion of $\zeta^w$ in the basis $\{\,\xi^z\mid z\in W\,\}$ of
  $K_T(G/B)$ is well-understood \cite[Lemma 3.4]{grahamkumar:positivity}:
  $\zeta^w= \sum_{v\in W_L} \xi^{wv}$.

\item The entries of the matrix $C$ are $c^{w'',v''}_{w,v}= \sum_{v'\in W_L}
  p^{w''v''}_{wv',v}$, where $p^{w''v''}_{wv',v}$ is the coefficient of
  $\xi^{w''v''}$ in $\xi^{wv'} \xi^v$.
  
\item The structure constants, $p^z_{x,y}$, can be computed either using
  results of Kostant and Kumar, or the formula in
  \cref{ssec:sc}\cref{eq:gz}. In particular, if $p^{w''v''}_{wv',v} \ne0$,
  then $w''\geq w$, so the matrix $C$ is block upper triangular with diagonal
  blocks $C^{w,w}$ for $w\in W^L$.
\item The matrix $C^{w,w}$ is lower triangular with units on the
  diagonal.  
\end{itemize}
This last statement is consistent with what has already been proved: it is
easy to see that modulo $S_+$, the matrix $C^{w,w}$ is diagonal, and hence
upper triangular.

To prove that the Leray-Hirsch homomorphism in \cref{eq:glhia} is an
isomorphism, it remains to show that $C^{w,w}$ is lower triangular with units
on the diagonal. This is accomplished in \cref{lem:coeff0} and \cref{cor:lam}.

In the following, $\star$ denotes a sum of terms $r_z\xi^z$ with $r_z\in S$
and $z\in W_{>w}^LW_L$, where $W_{>w}^L=\{\, w'\in W^L\mid w'>w\,\}$. For
example,
\[
  \zeta^w \xi^v=\sum_{v''} c^{w,v''}_{w,v} \xi^{wv''} + \star,
\]
because $C$ is block upper triangular.

\begin{lemma}\label{lem:coeff0}
  If $c_{w,v}^{w,v''}\ne0$ then $v''\leq v$. 
\end{lemma}

\begin{proof}
  In this proof we use results for push-pull operators proved in
  \cite{kostantkumar:equivariant}. In order to minimize the notational
  overhead, we use the notation developed in \cref{sec:lhi}.

  For a simple reflection $s=s_\alpha$ $\pi_s\colon G/B \to G/P_s$ be the
  projection. Define $A_s=\pi_s^* (\pi_{s})_* \colon K_T(G/B)\to
  K_T(G/B)$. One can check that as in diagram \cref{ssec:alg}\cref{eq:fda},
  $\Theta\circ A_s= \big(Y_s \bullet(\,\cdot\,) \big) \circ \Theta$. Computing
  in $\bfD^*$, it is straightforward to check that
  \[
    A_s(\xi^z)=\begin{cases} \xi^{zs}+\xi^z &\text{if $zs<z$,} \\
      0&\text{if $z<zs$}
    \end{cases} \quad\text{and that}\quad A_s(f)A_s(g)= A_s\big( A_s(f)g \big),
  \]
  for $z\in W$ and $f,g\in K_T(G/B)$. It follows from the first equality that
  $A_s(\zeta^w)= \zeta^w$ for $w\in W^L$.
  
  The proof of the lemma proceeds by induction on $\ell(w_L)- \ell(v)$, where
  $w_L$ is the longest element in $W_L$. The result is clearly true for
  $v=w_L$. Suppose $v\in W_L$, $vs<v$, and the conclusion of the lemma holds
  for $v$. Then
  \[
    A_s(\zeta^w \xi^v)= A_s(A_s(\zeta^w) \xi^v)= A_s(\zeta^w) A_s(\xi^v)=
    \zeta^w \xi^{vs}+\zeta^w\xi^v.
  \]
  On the other hand, by induction $\zeta^w \xi^{v} = \sum_{v''\leq v}
  c_{w,v}^{w,v''} \xi^{wv''} + \star$. Apply $A_s$ to both sides to get
  \begin{align*}
    A_s(\zeta^w \xi^{v}) &= \sum_{v''\leq v} c_{w,v}^{w,v''} A_s(\xi^{wv''})
    +\star = \sum_{\substack{v''\leq v\\ v''s<v''}}
    c_{w,v}^{w,v''}(\xi^{wv''s} +\xi^{wv''}) +\star \\
    &=  \sum_{\substack{v''\leq vs\\ v''<v''s}} c_{w,v}^{w,v''s}\xi^{wv''} +
    \sum_{\substack{v''\leq v\\ v''s<v''}} c_{w,v}^{w,v''}\xi^{wv''}
    +\star.
  \end{align*}
  Comparing both expressions for $A_s(\zeta^w \xi^{v})$ one sees that 
  \begin{align*}
    \zeta^w \xi^{vs}&= \sum_{\substack{v''\leq vs\\ v''<v''s}}
    c_{w,v}^{w,v''s}\xi^{wv''} + \Big( \sum_{\substack{v''\leq v\\ v''s<v''}}
    c_{w,v}^{w,v''}\xi^{wv''} -\sum_{v''\leq v} c_{w,v}^{w,v''} \xi^{wv''}
    \Big)  +\star \\ 
    &=  \sum_{\substack{v''\leq vs\\ v''<v''s}} (c_{w,v}^{w,v''s}
    -c_{w,v}^{w,v''}) \xi^{wv''}+ \star. 
  \end{align*}
  Therefore,
  \[
    c_{w,vs}^{w,v''}= \begin{cases} c_{w,v}^{w,v''s}-c_{w,v}^{w,v''} &\text{if
        $v''\leq vs$ and $v''<v''s$,} \\ 0&\text{otherwise.}
    \end{cases}
  \]
  It follows from this last equality that if $c_{w,vs}^{w,v''}\ne0$, then
  $v''\leq vs$.
\end{proof}

\begin{lemma}\label{lem:be}
  Suppose $v\in W_L$. Then $\zeta^e \xi^v=\xi^v$. In other words,
  $c_{e,v}^{v}=1$ and $c_{e,v}^{v'}=0$ for $v'\in W_L$ with $v'\ne v$.
\end{lemma}

\begin{proof}
  It is convenient to use the isomorphism $\Theta$ from diagram
  \cref{ssec:alg}\cref{eq:fda} and the base change matrices in \cref{ssec:bc}. 

  Multiplication in the $S$-algebra $\bfD^*$ is pointwise (see
  \cite[\S2]{kostantkumar:equivariant}), so the identity element is
  $\sum_{z\in W} f_z$. We claim that $\sum_{z\in W} f_z= \sum_{z\in W} Y_z^*$,
  whence $\sum_{z\in W} Y_z^* Y_w^*=Y_w^*$ for $w\in W$. Assume the claim has
  been proved and recall the $S$-algebra homomorphism $i_a^*\colon \bfD^*\to
  \bfD_L^*$ in \cref{ssec:DL}\cref{eq:dl}. This mapping is injective on the
  span of $\{\, Y_v^*\mid v\in W_L\,\}$ and
  \[
    i_a^*(Z_e^* Y_v^*)= \sum_{v'\in W_L} i_a^*(Y_{v'}^*) i_a^*(Y_v^*) =
    \sum_{v'\in W_L} Y_{v',L}^*Y_{v,L}^* =Y_{v,L}^*.
  \]
  Therefore, $Z_e^* Y_v^*=Y_v^*$ for $v\in W_L$. The assertion in the lemma
  now follows because $\Theta\colon K_T(G/B)\to \bfD^*$ is a ring isomorphism.

  To prove the claim, from \cref{ssec:bc}\cref{eq:25} we have
  \[
    \delta_z= \sum_{y\in W} b_{z,y} Y_y \quad\text{and}\quad Y_y^*= \sum_{z\in
      W} b_{z,y} f_z.
  \]
  Then
  \[
    1= \delta_z\cdot 1= \sum_{y\in W} b_{z,y} (Y_y\cdot 1)=\sum_{y\in W}
    b_{z,y}, \quad\text{so}\quad \sum_{y\in W} b_{z,y}=1.
  \]
  Therefore  
  \[
    \sum_{y\in W} Y_y^*= \sum_{y\in W} \sum_{z,y\leq z\in W} b_{z,y} f_z =
    \sum_{z\in W} \big(\sum_{y,y\leq z\in W} b_{z,y} \big) f_z= \sum_{z\in W}
    f_z.
  \]
  This completes the proof.

  This claim can also be proved ``geometrically'' using the equalities
  \[
    i^*(\zeta^e)= i^*\big(\sum_{w\in W} \xi^{w} \big) = i^*([\CO_{G/B}])=
    [\CO_{P/B}]= \sum_{v'\in W_L} \xi^{v'}_L,
  \]
  where the first and last equalities follow from \cite[Lemma
  4.2]{grahamkumar:positivity}.
\end{proof}

\begin{proposition}\label{pro:indp}
  Suppose $w\in W^L$, $s=s_\alpha$ is a simple reflection such that $sw>w$ and
  $sw\in W^L$, and $v,v', v''\in W_L$. Then
  \[
    p_{swv',v}^{swv''} = \begin{cases}e^{-\alpha}
      s(p_{wv',v}^{wv''})&\text{if $v<sv$} \\
      (1-e^{-\alpha}) s(p_{wv',sv}^{wv''}) +s(p_{wv',v}^{wv''})%
                         &\text{if $sv<v$.}  \end{cases}
  \]
\end{proposition}

\begin{proof}
  This recursion formula can be proved using the coproduct defined in
  \cite[2.14]{kostantkumar:equivariant} and \cite[Proposition
  3.6]{grahamkumar:positivity}. Here we give a shorter proof using the
  isomorphism $\Theta$ from diagram \cref{ssec:alg}\cref{eq:fda} and the
  formula for structure constants in \cite[Theorem~4.1]
  {goldinzhong:structure}.

  Suppose $s_0=s$, $\alpha_0=\alpha$, $I_w=(s_1, \dots, s_p)$, and
  $I_{v''}=(s_{p+1}, \dots, s_{p+q})$. Then by
  \cite[Theorem~4.1]{goldinzhong:structure}
  \[
    p_{swv',v}^{swv''} = \sum_{\substack{E \sqsubseteq I_{swv''} \\
        \wtilde(E)=swv'}}\ \sum_{\substack{F \sqsubseteq I_{swv''} \\
        \wtilde(F)=v}} \big( B_{0}^{E,F} B_{1}^{E,F} \dotsm B_{p+q}^{E,F}
    \cdot 1 \big) b_{E,wv'} b_{F, v},
  \]
  where $b_{E, wv'}$ is the coefficient of $Y^*_{wv'}$ in $Y^*_E$, and
  similarly for $F$. It is well-known that $Y^*_J= Y^*_{\wtilde(J)}$ for any
  sequence, $J$, of simple reflections in $W$ (see \cite[Lemma 5.1]
  {goldinzhong:structure}). Thus,
  \begin{equation}
    \label{eq:p1}
    p_{swv',v}^{swv''} = \sum_{\substack{E \sqsubseteq I_{swv''} \\
        \wtilde(E)=swv'}}\ \sum_{\substack{F \sqsubseteq I_{swv''} \\
        \wtilde(F)=v}} B_{0}^{E,F} B_{1}^{E,F} \dotsm B_{p+q}^{E,F} \cdot 1 ,
  \end{equation}
  where
  \[
    B_0^{E,F}= \begin{cases}
      (1-e^{-\alpha}) \delta_{s}&\text{if $s\in E\sqcap F$,}\\
      e^{-\alpha} \delta_{s}&\text{if $s\in (E\setminus F) \sqcup
                              (F\setminus E)$,}\\
      \frac 1 {1-e^{\alpha}} - \frac {e^{-\alpha}} {1-e^{\alpha}}
      \delta_{s}&\text{if $s\notin E\sqcup F$.}
    \end{cases}
  \]
  Recall from \cref{lem:ulev}\cref{it:u3} that $I_w\sqsubseteq E$.  To
  simplify the formulas, set $\Bbar= B_{1}^{E,F} \dotsm B_{p+q}^{E,F}$, where
  the ambient sequence containing $E$ and $F$ is determined by context.
  
  Suppose first that $sv>v$. Then no reduced expression for $v$ begins with
  $s$, so for $E$ and $F$ in \cref{eq:p1}, we have $F\sqsubseteq I_{wv''}$,
  $s_0\in E \setminus F$, and $B_0^{E,F}= e^{-\alpha} \delta_{s}$. Therefore,
  \[
    B_{0}^{E,F} \Bbar \cdot 1 = \big( e^{-\alpha} \delta_{s} \Bbar \big) \cdot
    1 = e^{-\alpha} s \big( \Bbar \cdot 1\big) .
  \]
  Now
  \[
    p_{swv',v}^{swv''}  = \sum_{\substack{E \sqsubseteq I_{swv''} \\
        \wtilde(E)=swv'}}\ \sum_{\substack{F \sqsubseteq I_{wv''} \\
        \wtilde(F)=v}} e^{-\alpha} s \big( \Bbar \cdot 1\big)
    = e^{-\alpha} s \Big( \sum_{\substack{E \sqsubseteq I_{wv''} \\
        \wtilde(E)=wv'}}\ \sum_{\substack{F \sqsubseteq I_{wv''} \\
        \wtilde(F)=v}} \Bbar \cdot 1\Big) = e^{-\alpha} s (p_{wv',v}^{wv''}).
  \]
  where in the second equality we've used that the assignment $E\mapsto
  E\setminus \{s\}$ defines a bijection between $\{\, E\sqsubseteq I_{swv''}
  \mid \wtilde(E)= swv''\,\}$ and $\{\, E\sqsubseteq I_{wv''} \mid \wtilde(E)=
  wv''\,\}$.
  
  Now suppose $sv<v$. Similar reasoning shows that
  \begin{align*}
    p_{swv',v}^{swv''}%
    &=  (1-e^{-\alpha}) s\Big( \sum_{\substack{E \sqsubseteq
      I_{swv''} \\ \wtilde(E)=swv'}}\ \sum_{\substack{s_0\in F \sqsubseteq
    I_{swv''} \\ \wtilde(F)=v}}   \Bbar \cdot 1 \Big)
    + e^{-\alpha} s \Big( \sum_{\substack{E \sqsubseteq I_{swv''} \\ 
    \wtilde(E)=swv'}}\ \sum_{\substack{F \sqsubseteq I_{swv''} \setminus
    \{s_0\} \\ \wtilde(F)=v}}   \Bbar \cdot 1 \Big) \\
    &=  (1-e^{-\alpha}) s\big( p_{wv', sv}^{wv''} \big)
      + e^{-\alpha} s \big( p_{wv', v}^{wv''} \big) .
  \end{align*}
\end{proof}

\begin{corollary}\label{cor:cswv}
  With the assumptions in the proposition,
  \[
    c_{sw,v}^{sw,v''} = \begin{cases}e^{-\alpha}
      s(c_{w,v}^{w,v''})&\text{if $v<sv$,} \\
      s(c_{w,v}^{w,v''}) + (1-e^{-\alpha}) s(c_{w,sv}^{w,v''})%
                        &\text{if $sv<v$.} \end{cases}
  \]
  Consequently,
  \[
    c_{sw,v}^{sw,v}=
    \begin{cases}
      e^{-\alpha} s(c_{w,v}^{w,v})&\text{if $v<sv$} \\
      s(c_{w,v}^{w,v}) &\text{if $sv<v$.}
    \end{cases}
  \]
\end{corollary}

\begin{proof}
  The first equality follows from the proposition by summing over $v'\in
  W_L$. The second follows from the first because $c_{w,sv}^{w,v}=0$ when
  $sv<v$ by Lemma \ref{lem:coeff0}.
\end{proof}

\begin{corollary}\label{cor:lam}
  Suppose $w\in W^L$ and $v\in W_L$. Then there is an element $\lambda_{w,v}$
  in the root lattice of $\Phi$ such that $c_{w,v}^{w,v} =
  e^{-\lambda_{w,v}}$.
\end{corollary}

\begin{proof}
  The proof proceeds by induction on $\ell(w)$. By Lemma \ref{lem:be},
  $c_{e,v}^v=1$ for $v\in W_L$, and so $\lambda_{e,v}=0$. This starts the
  induction.

  Now suppose the result holds for $w$, that $sw>w$ with $sw\in W^L$, and that
  $v\in W_L$. By induction and Corollary \ref{cor:cswv},
  \begin{itemize}
  \item if $v<sv$, then $c_{sw,v}^{sw,v}= e^{-\alpha} s(c_{w,v}^{w,v}) =
    e^{-\alpha- s(\lambda_{w,v})}$, and so $\lambda_{sw,v}= \alpha+
    s(\lambda_{w,v})$, and
  \item if $sv<v$, then $c_{sw,v}^{sw,v}= s(c_{w,v}^{w,v}) = e^{-
      s(\lambda_{w,v})}$, and so $\lambda_{sw,v}= s(\lambda_{w,v})$.
  \end{itemize}
\end{proof}



\bibliographystyle{plain}


\end{document}